\numberwithin{equation}{section}
\newtheorem{theorem}{Theorem}[section]
\newtheorem{definition}[theorem]{Definition}
\newtheorem{proposition}[theorem]{Proposition}
\newtheorem{corollary}[theorem]{Corollary}
\newtheorem{lemma}[theorem]{Lemma}
\newcommand{\B}{\mathbb{B}}
\newcommand{\C}{\mathbb{C}}
\newcommand{\N}{\mathbb{N}}
\newcommand{\R}{\mathbb{R}}
\renewcommand{\P}{\mathbb{P}}
\title{Intersection of positive closed currents \\
 of higher bidegree}
\author{Duc-Viet Vu}
\begin{document}

\maketitle

\begin{abstract} Let $X$ be a compact K\"ahler manifold of dimension $n.$ Let $T$ and $S$ be two positive closed currents on $X$ of bidegree $(p,p)$ and $(q,q)$ respectively with $p+q\le n.$ Assume that $T$ has a continuous super-potential. We prove that the wedge product $T \wedge S,$ defined by Dinh and Sibony, is a positive closed current.
\end{abstract}

\noindent

\noindent
{\bf Keywords: } positive closed current, intersection of currents, super-potential.

\section{Introduction}
Let $X$ be a compact K\"ahler manifold of dimension $n.$ 
Let $T$ and $S$ be two positive closed currents on $X$ of bidegree $(p,p)$ and $(q,q)$ respectively  with $p+q\le n$. In \cite{Demailly2}, Demailly asked  the question to define the intersection  $T \wedge S.$ The theory of intersections of currents of bidegree $(1,1)$ is well developed, see, e.g., \cite{Bedford_Taylor, Chern_Levine_Nirenberg, Demailly_ag, Fornaess_Sibony}. So the question of Demailly concerns currents of higher degree. 

The problem was recently solved by Dinh and Sibony in \cite{DS_superpotential} using their theory of super-potentials (see also \cite{DSfourier}). Assume that $T$ has continuous super-potentials (see \cite{DS_superpotential} or Section \ref{sec2} for the terminology). Then the wedge product $T \wedge S$ is well-defined. It is known that this product is the difference of two positive closed currents. The operator  satisfies basic properties like the commutativity and the associativity when intersect several currents. The Hodge cohomology class of $T \wedge S$ is the cup product of the ones of $T$ and $S.$ Moreover, $T \wedge S$ depends continuously on $S.$ Therefore, it is positive when $S$ can be approximated by smooth positive closed forms. The last property of approximation is satisfied when $X$ is a homogeneous manifold and also in the case of some dynamical Green currents. The purpose of this work is to prove the positivity of $T \wedge S$ in the general setting. We have the following theorem. 
\begin{theorem} \label{the_positivity} Let $X$ be a compact K\"ahler manifold of dimension $n$. Let $T$ and $S$ be  two positive closed currents on $X$ of bidegree $(p,p)$ and $(q,q)$ respectively with $p+q \le n.$ Assume that $T$ has a continuous super-potential. Then the intersection current $T \wedge S$ is a positive closed current of bidegree $(p+q, p+q)$. 
\end{theorem}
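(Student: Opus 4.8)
The plan is to observe that only positivity remains to be shown: the excerpt already records that $T\wedge S$ is closed, of bidegree $(p+q,p+q)$, carries the cup-product class, depends continuously on $S$, and is positive whenever $S$ is a limit of smooth positive closed forms. So the entire task is to reduce a general $S$ to this approximable situation. Since positivity of a current is a local matter, it suffices to prove $\langle T\wedge S,\Phi\rangle\ge0$ for every smooth strongly positive test form $\Phi$ with $\supp\Phi$ contained in an arbitrarily small coordinate ball $U\cong B\subset\C^n$. On such a ball I would regularize $S$ by holomorphic convolution — averaging the translates $z\mapsto z+w$ for $|w|<\varepsilon$ against a smooth nonnegative radial weight — to obtain smooth positive closed forms $S_\varepsilon\to S$ on a slightly smaller ball; translations are biholomorphisms, so this averaging preserves both positivity and closedness. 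This is exactly the mechanism that makes the homogeneous case work, and the only obstruction to using it in general is global: the $S_\varepsilon$ live only on $U$, and the impossibility of patching them into one global smooth positive closed form is precisely what makes the theorem nontrivial on a non-homogeneous $X$.

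The key step I would aim to establish is therefore a locality principle for the Dinh--Sibony product: when $T$ has continuous super-potential, the restriction $(T\wedge S)|_U$ depends only on $T$ and on $S|_U$. Granting this, on $U$ one gets $(T\wedge S)|_U=\lim_{\varepsilon\to0}(T\wedge S_\varepsilon)|_U$ from the continuity of the product in $S$, while each $T\wedge S_\varepsilon$ is positive because $S_\varepsilon$ is smooth positive and closed — the case recalled in the introduction. A weak limit of positive currents is positive, so $T\wedge S\ge0$ on $U$; letting $U$ shrink around an arbitrary point of $X$ then proves the theorem.

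The hard part will be the locality principle, since super-potentials are by construction global objects, defined through the Green operator and solutions of $\ddc$-equations on all of $X$. The route I would take is to approximate $T$ by smooth closed forms $T_k$ whose super-potentials converge uniformly to the (continuous) super-potential of $T$ — available precisely because that super-potential is continuous — and to use that for smooth $T_k$ the product $T_k\wedge S$ is the ordinary pairing $\Phi\mapsto\langle S,T_k\wedge\Phi\rangle$, which is manifestly local in $S$. The content is then to show that the convergence $T_k\wedge S\to T\wedge S$ is controlled by the super-potentials in a way that, on $U$, only sees $S|_U$; the uniform control on the super-potentials of $T_k$ should absorb the error terms that a priori involve the global potential of $S$. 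Conceptually this is clarified by passing to $X\times X$, where $T\wedge S$ is the slice along the diagonal $\Delta$ of the manifestly positive closed current $T\otimes S$: near $\Delta$ the normal bundle carries a fibrewise additive structure, so the holomorphic convolution above is simply convolution in the normal directions, and the whole difficulty concentrates in showing that this slicing commutes with the regularizing limit while preserving positivity.
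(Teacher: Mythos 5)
Your approach has a genuine gap at its center, and it is precisely the point you flag as ``the hard part.'' The identity $(T\wedge S)|_U=\lim_{\varepsilon\to 0}(T\wedge S_\varepsilon)|_U$, which carries all the weight of your argument, has an undefined right-hand side: the regularizations $S_\varepsilon$ exist only on (a shrinking of) the ball $U$, while the Dinh--Sibony product --- the only product available when $T$ is singular --- is defined exclusively for \emph{global} currents in $\mathcal{D}_q$, since the super-potential $\mathcal{U}_T$ is a global object evaluated on globally defined currents $dd^c\Phi\wedge S\in\mathcal{D}^0_{n-p+1}$. Note also that the locality principle you propose to establish is, among global currents, actually immediate from (\ref{defi_wedgecurrents}): if $S=S'$ on a neighborhood of $\supp\Phi$, then $dd^c\Phi\wedge S=dd^c\Phi\wedge S'$ and $\Phi\wedge S=\Phi\wedge S'$ as currents, because both only test $S$ near $\supp\Phi$, whence $\langle T\wedge S,\Phi\rangle=\langle T\wedge S',\Phi\rangle$. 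But this easy locality is useless for your purpose, precisely because --- as you yourself observe --- the local forms $S_\varepsilon$ admit no global positive closed extension, and no global extension at all is produced. What you actually need is a \emph{local} intersection theory: a definition of $T|_U\wedge S_\varepsilon$ (say as the ordinary wedge against the smooth form $S_\varepsilon$) together with a proof that these local products converge to $(T\wedge S)|_U$. Your sketch for this (approximate $T$ by smooth $T_k$ with super-potential control and let ``uniform control absorb the error terms'') gives no mechanism: the error terms involve super-potentials evaluated on currents built from $S-S_\varepsilon$, which again are not globally defined, so the argument is circular at the exact point where the difficulty is concentrated.

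For contrast, the paper never localizes the higher-bidegree product; instead it changes the global geometry so that the only intersection that needs a positivity argument is one of bidegree $(1,1)$, where the classical local theory (the one your convolution idea legitimately belongs to) applies. Concretely, it blows up $X\times X$ along $\Delta$, so that intersecting with $\Delta$ becomes intersecting with the hypersurface $\widehat{\Delta}$, whose current admits a quasi-p.s.h.\ potential $\hat u$ with logarithmic singularities. The continuity of the super-potential of $T$ (inherited by $\widehat T$, Lemma \ref{le_laiphaisuaconU_T}) is used to show that $\hat u\,\widehat{T\otimes S}$ is well-defined, so that one may set $\widehat{T\otimes S}\wedge[\widehat{\Delta}]=dd^c\big(\hat u\,\widehat{T\otimes S}\big)+\widehat{T\otimes S}\wedge\widehat{\beta}$; positivity of this current is then the standard local statement for $(1,1)$-intersections via decreasing smooth p.s.h.\ approximations (Corollary \ref{co_giaovoiduongcheo}). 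Finally, Proposition \ref{pro_haidinhnghiatuongduong} identifies $(\Pi_j)_*\big(\widehat{T\otimes S}\wedge[\widehat{\Delta}]\wedge\widehat{\omega}^{n-1}\big)$ with the Dinh--Sibony product, and positivity follows by push-forward. If you want to salvage your plan, you would have to build a local super-potential-type theory on $U$ compatible with the global product --- a substantial undertaking that is not available with the tools at hand.
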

In Section \ref{sec2}, we will recall some basic properties of positive closed currents and their super-potentials. In Section \ref{sec3}, we will introduce an alternative definition of $T \wedge S$ which is a positive closed current. We then show that this definition is equivalent to the one by Dinh and Sibony. The above result will follow immediately. 
We will present now the main idea. 

Suppose first that $T$ and $S$ are positive closed smooth forms of $X$. 
Let $\pi_j$ ($j=1,2$) be the projections from $X \times X$ to the first and second components respectively. We have $T \otimes S= \pi_1^* (T) \wedge \pi_2^* (S).$ This is a positive closed smooth form on $X \times X.$ Then one can compute $T \wedge S$ via the formula 
\begin{align} \label{equ_vietointro}
T \wedge S= (\pi_j)_*(T \otimes S \wedge [\Delta]) \text{ for } \, j=1,2,
\end{align} 
where $[\Delta]$ is the current of integration on the diagonal $\Delta$ of $X \times X.$ 

Observe that because of $[\Delta],$ the formula (\ref{equ_vietointro}) can not  be extended to general singular currents $T$ and $S.$ We can however use the theory of intersection with $(1,1)$-currents if in the place of $\Delta$ we have a hypersurface. This is the reason why we consider the blow-up $\widehat{X \times X}$ of $X \times X$ along $\Delta.$ Let  $\Pi$ be the natural projection from $\widehat{X \times X}$ to $X \times X$ and  $\widehat{\Delta}= \Pi^{-1}(\Delta)$ be the exceptional hypersurface. Recall from \cite{Blanchard, Voisin1} that the blow-up of a compact K\"ahler manifold along a submanifold is also K\"ahler. Let $\widehat{\omega}$ be a K\"ahler form of  $\widehat{X \times X}$.  Observe that $\Pi_{*}(\widehat{\omega}^{n-1} \wedge [\widehat{\Delta}])$ is a non-zero positive closed current of $X \times X$ supported on $\Delta$ and has the same dimension as $\Delta.$ Therefore, it equals a constant times $[\Delta],$ see, e.g., \cite{Demailly_ag}. By normalizing $\widehat{\omega},$ we can suppose that 
\begin{align} \label{equ_omega}
\Pi_{*}(\widehat{\omega}^{n-1} \wedge [\widehat{\Delta}])=[\Delta].
\end{align}
 Put $\widehat{T \otimes S}= \Pi^*(T \otimes S)$ and $\Pi_j= \pi_j \circ \Pi$ ($j=1,2$).  Then (\ref{equ_vietointro}) can be rewritten as 
\begin{align} \label{equ_vietointro2}
T \wedge S= (\Pi_j)_*( \widehat{T \otimes S} \wedge \widehat{\omega}^{n-1} \wedge [\widehat{\Delta}]).
\end{align}

In general, when $T$ and $S$ are only positive closed currents, one still can define  $\widehat{T \otimes S}$ as a positive closed current  outside $\widehat{\Delta}$ and extend it by $0$ through  $\widehat{\Delta}$. We can show that $ \widehat{T \otimes S} \wedge \widehat{\omega}^{n-1} \wedge [\widehat{\Delta}]$ is well-defined provided that $T$ has a continuous super-potential. In this case, we can use (\ref{equ_vietointro2}) as an alternative definition of $T \wedge S$ which gives a positive closed current, see Corollary \ref{co_giaovoiduongcheo}. Proposition  \ref{pro_haidinhnghiatuongduong} below shows that this definition is equivalent to the one of Dinh and Sibony.

\vskip 0.5cm
\noindent
{\bf Acknowledgement.} The author  would like to thank Tien-Cuong Dinh for his valuable help during the preparation of this paper. This research is supported by grants from R\'egion Ile-de-France. 
\section{Super-potential of positive closed currents} \label{sec2}
We will recall now some basic facts and refer to \cite{DS_superpotential} for details. Let $X$ be a compact K\"ahler manifold of dimension $n$ and $\omega$ be a K\"ahler  form on $X$.   It is well-known that the de Rham cohomology of currents and smooth forms  are canonically equal (see \cite[Chap. 3]{GH}). Denote them by $H^r(X,\C)$ with $0 \le r \le n.$ For any closed current $T$ of degree $r,$ denoted by $\{T\}$ its cohomology class in $H^r(X, \C).$ Let $H^{p,p}(X,\R)$ be the vector subspace of $H^{p,p}(X,\C)$ spanned by the classes of closed real $2p$-forms. 
Since a closed positive $(p,p)$-current is real, its class belongs to $H^{p,p}(X,\R).$ If $V$ is an analytic subset of $X$ of dimension $n-p,$ it defines a positive closed current $[V]$ of bidegree $(p,p)$  by integration over $V.$ Its class will be denoted by $\{V\}$ for simplicity.

Let $\mathcal{C}_p$ be the convex cone of positive closed $(p,p)$-currents on $X$ and $\mathcal{D}_p$ be the real vector space generated by $\mathcal{C}_p.$ Since the K\"ahler form $\omega$ is strictly positive, the set $\mathcal{D}_p$ contains all real closed smooth $(p,p)$-forms.  Let $\mathcal{D}^0_p$ be the subspace of $\mathcal{D}_p$ of currents belonging to the class $0$ in $H^{p,p}(X,\R).$ We recall the notion of  $*$-norm on $\mathcal{D}_p.$ Consider first a positive closed current $S$ in $\mathcal{D}_p. $ Define its $*$-norm  by 
$$\|S\|_{*}= |\langle  S, \omega^{n-p} \rangle|$$
which is equal to the mass of $S$. In general, since any $S \in \mathcal{D}_p$ can be written as the difference of two positive closed currents, define
$$\|S\|_{*}= \inf ( \|S^+\|_*+ \|S^-\|_* ),$$
where the infimum is taken over all $S^+, S^- \in \mathcal{C}_p$ such that $S= S^+ - S^-.$ By compactness property of positive closed currents, the above infimum is attained for some $S^+$ and  $S^-.$ 
 We say that $S_k$ converges to $S$ in $\mathcal{D}_p$ for the $*$-topology if $S_k$ converges to $S$ weakly as currents and $\|S_k\|_{*}$ is bounded independently of $k.$ The following result is due to Dinh and Sibony, see \cite[Th. 2.4.4]{DS_superpotential} and also \cite[Th. 1.1]{DinhSibony_regularization}. 
\begin{proposition} \label{pro_convergence*}
There is a positive constant $c$ such that for all $S \in \mathcal{D}_p,$ there exist smooth forms  $S_k \in \mathcal{D}_p$ with $k \in \N$ such that $S_n$ converges weakly to $S$ and $\|S_k\|_{*} \le c \|S\|_{*}$ for all $k.$
 \end{proposition}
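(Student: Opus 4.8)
The plan is to regularize by a convolution-type operator on $X\times X$ whose kernel is a smooth family approximating the current of integration along the diagonal. Write $\pi_1,\pi_2\colon X\times X\to X$ for the two projections, and suppose for the moment that we are given a family of smooth $(n,n)$-forms $K_\theta$ on $X\times X$, indexed by $\theta>0$, that concentrate on $\Delta$ as $\theta\to 0$, and such that each $K_\theta$ splits as $K_\theta=K_\theta^+-K_\theta^-$ with $K_\theta^\pm$ smooth positive closed forms whose cohomology classes $\{K_\theta^\pm\}$ are bounded uniformly in $\theta$. By the definition of the $*$-norm it suffices to treat a current $S\in\mathcal{C}_p$: for general $S\in\mathcal{D}_p$ we fix an optimal decomposition $S=S^+-S^-$ realizing $\|S\|_*=\|S^+\|_*+\|S^-\|_*$ and regularize the two pieces separately. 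For $S\in\mathcal{C}_p$ set
\[
S_\theta:=(\pi_1)_*\bigl(\pi_2^*S\wedge K_\theta\bigr).
\]
Because $K_\theta$ is a smooth kernel on $X\times X$, the Schwartz-kernel argument shows that $S_\theta$ is a smooth $(p,p)$-form on $X$; since $K_\theta$ and $\pi_2^*S$ are closed, $S_\theta$ is closed, so $S_\theta\in\mathcal{D}_p$. As the $K_\theta$ form an approximate identity concentrating on $\Delta$, we have $S_\theta\to S$ weakly, and it remains only to bound $\|S_\theta\|_*$.

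The mass estimate is the clean part. Put $S_\theta^\pm:=(\pi_1)_*(\pi_2^*S\wedge K_\theta^\pm)$, so that $S_\theta=S_\theta^+-S_\theta^-$ with $S_\theta^\pm$ smooth positive closed. Then
\[
\|S_\theta\|_*\le \|S_\theta^+\|_*+\|S_\theta^-\|_*=\langle S_\theta^+,\omega^{n-p}\rangle+\langle S_\theta^-,\omega^{n-p}\rangle,
\]
and by the projection formula $\langle S_\theta^\pm,\omega^{n-p}\rangle=\langle \pi_2^*S\wedge K_\theta^\pm,\pi_1^*\omega^{n-p}\rangle$. The right-hand side is the integral over $X\times X$ of the positive closed current $\pi_2^*S\wedge K_\theta^\pm$ against the closed form $\pi_1^*\omega^{n-p}$, hence a cohomological pairing; via the K\"unneth decomposition of $H^{*}(X\times X,\C)$ it is linear in $\{K_\theta^\pm\}$ and bounded by a constant times $\langle S,\omega^{n-p}\rangle=\|S\|_*$. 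Since the classes $\{K_\theta^\pm\}$ are uniformly bounded, this gives $\|S_\theta\|_*\le c\,\|S\|_*$ with $c$ independent of $\theta$ and of $S$, which is the assertion.

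Everything therefore reduces to constructing the kernels $K_\theta$, and this is the main obstacle. One cannot simply take smooth positive closed representatives of $\{\Delta\}$, since that class need not lie in the closure of the K\"ahler cone of $X\times X$; this is what forces the splitting $K_\theta=K_\theta^+-K_\theta^-$ and makes the uniform class bound delicate. A natural route, consistent with the blow-up picture of the introduction, is to regularize on $\widehat{X\times X}$: the exceptional divisor yields a $(1,1)$-current $[\widehat{\Delta}]$, for which the well-developed regularization theory of $(1,1)$-currents (see, e.g., \cite{Demailly_ag}) produces smooth closed real $(1,1)$-forms $\alpha_\theta\to[\widehat{\Delta}]$ in the fixed class $\{\widehat{\Delta}\}$ with $\alpha_\theta\ge -C\widehat{\omega}$ for $C$ independent of $\theta$, the uniformity coming from the fact that the Lelong numbers of $[\widehat{\Delta}]$ are bounded; then $\alpha_\theta=(\alpha_\theta+C\widehat{\omega})-C\widehat{\omega}$ is a difference of positive closed forms with bounded classes. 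Wedging $\alpha_\theta$ with $\widehat{\omega}^{n-1}$ and pushing down through (\ref{equ_omega}) produces currents approximating $[\Delta]$ with the desired splitting and class bounds. The genuine difficulty is to arrange this so that the resulting kernel is \emph{smooth} on $X\times X$: the pushforward of a smooth form under the blow-down need not be smooth, so one must instead mollify in a tubular neighborhood of $\Delta$ and patch globally, then add an exact correction to restore closedness without destroying the uniform control of the positive and negative parts. Realizing smoothness, closedness, and bounded masses simultaneously is precisely the heart of the Dinh--Sibony construction.
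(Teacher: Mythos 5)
Your proposal should be measured against the proof the paper actually points to: the paper does not prove this proposition at all, it quotes it from Dinh and Sibony (\cite[Th.\ 2.4.4]{DS_superpotential}, \cite[Th.\ 1.1]{DinhSibony_regularization}). Measured that way, your reduction is essentially the skeleton of the cited proof: smooth the current by a kernel operator $S\mapsto(\pi_1)_*(\pi_2^*S\wedge K_\theta)$, where $K_\theta$ regularizes the current of integration on the diagonal, and control the $*$-norm cohomologically. The mass estimate you give is sound as far as it goes: the class of $\pi_2^*S\wedge K_\theta^{\pm}$ is the cup product $\pi_2^*\{S\}\smile\{K_\theta^{\pm}\}$, the pairing against $\pi_1^*\{\omega\}^{n-p}$ is purely cohomological, and for positive closed $S$ the norm of $\{S\}$ in $H^{p,p}(X,\R)$ is comparable to $\|S\|_*$ (the mass functional is strictly positive on the closed cone of classes of positive closed currents, hence equivalent to any norm on it), so uniform boundedness of $\{K_\theta^{\pm}\}$ does yield $\|S_\theta\|_*\le c\|S\|_*$ with $c$ independent of $S$ and $\theta$.

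The genuine gap is the one you name yourself: the kernels $K_\theta$ are never constructed, and their assumed properties carry the entire content of the theorem. You need, simultaneously, smoothness of $K_\theta$ on $X\times X$, closedness, the splitting $K_\theta=K_\theta^+-K_\theta^-$ into smooth positive closed forms with uniformly bounded classes, and the approximate-identity property; the weak convergence $S_\theta\to S$ is also asserted rather than proved, and it does not follow formally from ``concentration on $\Delta$'' but requires $K_\theta\to[\Delta]$ as currents together with control of the transported test forms. Your blow-up sketch correctly locates the obstruction: Demailly regularization of $[\widehat{\Delta}]$ does give smooth forms $\alpha_\theta$ in the class $\{\widehat{\Delta}\}$ with $\alpha_\theta\ge -C\widehat{\omega}$ uniformly, but the pushforward $\Pi_*(\alpha_\theta\wedge\widehat{\omega}^{n-1})$ is in general not smooth along $\Delta$ (the inverse of the blow-down has derivatives blowing up like $1/x_1'$ in the coordinates of Lemma \ref{le_ukhatichvoiS}, so the pushed-down coefficients are merely $L^1$), and without a smooth kernel the standard argument that $S_\theta$ is smooth collapses. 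Repairing this --- mollifying near $\Delta$, restoring closedness by exact corrections, and keeping the positive and negative parts and their classes uniformly controlled --- is precisely the content of \cite[Th.\ 1.1]{DinhSibony_regularization}. As submitted, your text proves the easy reduction and gives an accurate diagnosis of the remaining difficulty, but it is not a proof of the proposition.
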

Let $T$ be in $\mathcal{D}_p$ and $R$ be in $\mathcal{D}^0_q.$ By $dd^c$-lemma for currents (see  \cite[Th. 1.2.1]{Gillet_Soule}), there is a real  $(q-1,q-1)$-current $U_R$ such that $dd^c U_R= R.$ We call $U_R$ a potential of $R.$ Consider the following important example of $R.$ Let   $V$ be a hypersurface of $X$ and $\beta_0$ be a smooth form of the same cohomology class with $[V].$ Then $R= [V]-\beta_0$ is in $\mathcal{D}^0_1.$  One can construct an explicit potential $U_R$ as follows. Consider the holomorphic line bundle of $X$ associated with $V$ and $\sigma$ a holomorphic section whose divisor is $V.$ Take a smooth Hermitian metric on this line bundle and denote by $| \cdot|$ the norm induced by this metric. By Poincar\'e-Lelong formula, there is a smooth form $\beta_1$ such that 
$$dd^c \log |\sigma|= [V]- \beta_1.$$
Since $\{\beta_0\}=\{V\}= \{\beta_1\},$ there is a smooth function $f$ on $X$ such that $dd^c f= \beta_0 - \beta_1.$ The function $U_R:= \log |\sigma| - f$ is a potential of $R.$   
Note that $U_R$ is smooth outside $V$ and if $\sigma'$ is a holomorphic function on an open neighborhood $W$ of a point of $V$ such that its divisor is $V \cap W,$ then  
\begin{align} \label{ine_landautienvoiphi}
U_R(x)- \log |\sigma'|  \text{ is smooth on $W$.} 
\end{align}
 
Consider now a current $R \in \mathcal{D}^0_{n-p+1}$ and an $(n-p,n-p)$-current $U_R$ which is a potential of $R$. Let $\alpha=(\alpha_1,\cdots,\alpha_h)$ with $h= \dim H^{p,p}(X,\R)$ be a fixed family of real smooth closed $(p,p)$-forms such that the family of classes $\{ \alpha\}= (\{\alpha_1\}, \cdots, \{\alpha_h\})$ is a basis of $H^{p,p}(X,\R).$ By adding to $U_R$ a suitable closed smooth form, we can assume that $\langle U_R, \alpha_i \rangle=0$ for $i=1,\cdots,h.$ We say that $U_R$ is $\alpha$-normalized. 
\begin{definition}\label{de_superpotential} (\cite[Def. 3.2.2]{DS_superpotential}) Let $T$ be a current in $\mathcal{D}_p$ as above. The $\alpha$-normalized super-potential $\mathcal{U}_T$ of $T$ is the function defined on smooth forms $R \in \mathcal{D}^0_{n-p+1}$ and given by 
$$\mathcal{U}_T(R)= \langle T, U_R \rangle,$$
where $U_R$ is an $\alpha$-normalized smooth potential of $R.$ We say that $T$ has a continuous super-potential if $\mathcal{U}_T$ can be extended to a function on $\mathcal{D}^0_{n-p+1}$ which is continuous with respect to the $*$-topology. In this case, the extension is also denoted by $\mathcal{U}_T.$
\end{definition}  
 By  \cite[Lem. 3.2.1]{DS_superpotential}, $\mathcal{U}_T(R)$ does not depend on the choice of an $\alpha$-normalized $U_R.$ And the continuity of $\mathcal{U}_T$ does not depend on $\alpha.$  Observe that when $\{T\}=0,$  the $\alpha$-normalized super-potential of $T$ does not depend on $\alpha.$ Indeed, in this case, it is the restriction of any potential $U_T$ of $T$ to the set of smooth forms in  $\mathcal{D}^0_{n-p+1}.$ Assume that $T$ has a continuous super-potential. Take any current $S \in \mathcal{D}_{q}.$ Let $(a_1,\cdots, a_h)$ be the coefficients of $\{T\}$ in the basis $\{\alpha\}.$  Define $T \wedge S$ to be the real $(p+q,p+q)$-current satisfying
\begin{align} \label{defi_wedgecurrents}
\langle T \wedge S, \Phi \rangle := \mathcal{U}_T\big( dd^c \Phi  \wedge S \big)+ \sum_{1 \le j \le h} a_j \langle \alpha_j, \Phi \wedge S \rangle,
\end{align}
for any real smooth $(n-p-q, n-p-q)$-form $\Phi.$ 

\section{Alternative definition for the intersection of currents} \label{sec3}
Let $X, \widehat{X \times X}, \omega, \widehat{\omega}, \Pi, \Pi_j, \pi_j, \Delta, \widehat{\Delta}$ be as in the previous sections. Consider two currents $T \in \mathcal{D}_p$ and $S \in \mathcal{D}_q$ as above with $p+q \le n.$ Let $h, a_j$ and $\alpha_j$ with $1 \le j \le h$ be as in the last section. From now on, assume that $T$ is positive and has a continuous super-potential.
Note that $\Pi_j=\pi_j \circ \Pi$ are submersions, for a proof see \cite{DS_superpotential} or the proof of Lemma \ref{le_ukhatichvoiS} below. 
Define $\widehat{T}= \Pi_1^{*}(T)$ and $\widehat{S}= \Pi_2^*(S).$ They are positive closed currents on $\widehat{X \times X}.$ Put $\widehat{\alpha}_j=\Pi_1^*(\alpha_j)$ for $1 \le j \le h.$ 
\begin{lemma} \label{le_laiphaisuaconU_T} The current $\widehat{T}$ has a continuous super-potential. 
\end{lemma}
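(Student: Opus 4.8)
The plan is to reduce the continuity of the super-potential of $\widehat{T}=\Pi_1^*(T)$ to the continuity of the super-potential of $T$ by means of the projection formula for the submersion $\Pi_1$. Since $\Pi_1$ is a submersion, the pullback $\widehat{T}=\Pi_1^*(T)$ is a well-defined positive closed current of bidegree $(p,p)$ on $\widehat{X\times X}$, which has dimension $2n$; its super-potential is therefore a function on the smooth forms in $\mathcal{D}^0_{2n-p+1}(\widehat{X\times X})$. I first note that $\Pi_1^*=\Pi^*\circ\pi_1^*$ is injective on cohomology: $\pi_1^*$ is injective by the K\"unneth formula, and $\Pi^*$ is injective because $\Pi$ is the blow-up of $X\times X$ along a submanifold, so $\Pi_*\Pi^*=\id$. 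Consequently the classes of $\Pi_1^*(\alpha_1),\dots,\Pi_1^*(\alpha_h)$ are linearly independent in $H^{p,p}(\widehat{X\times X},\R)$, and I would complete the family $\widehat{\alpha}_j=\Pi_1^*(\alpha_j)$, $1\le j\le h$, to a basis $\widehat{\alpha}$ of $H^{p,p}(\widehat{X\times X},\R)$ and use it to normalize the super-potential of $\widehat{T}$; recall that continuity of a super-potential is independent of the chosen normalization.

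The core of the argument is the identity
\[
\mathcal{U}_{\widehat{T}}(\widehat{R})=\mathcal{U}_T\big((\Pi_1)_*\widehat{R}\big)
\]
for every smooth $\widehat{R}\in\mathcal{D}^0_{2n-p+1}(\widehat{X\times X})$. To obtain it, let $U_{\widehat{R}}$ be the $\widehat{\alpha}$-normalized smooth potential of $\widehat{R}$. The projection formula gives $\langle\widehat{T},U_{\widehat{R}}\rangle=\langle\Pi_1^*(T),U_{\widehat{R}}\rangle=\langle T,(\Pi_1)_*U_{\widehat{R}}\rangle$. As $\Pi_1$ is a proper submersion, $(\Pi_1)_*U_{\widehat{R}}$ is a smooth form on $X$ with $dd^c(\Pi_1)_*U_{\widehat{R}}=(\Pi_1)_*\widehat{R}$, hence a smooth potential of $(\Pi_1)_*\widehat{R}$; moreover $(\Pi_1)_*\widehat{R}\in\mathcal{D}^0_{n-p+1}(X)$ since $\{(\Pi_1)_*\widehat{R}\}=(\Pi_1)_*\{\widehat{R}\}=0$. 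The choice $\widehat{\alpha}_j=\Pi_1^*(\alpha_j)$ ensures that this potential is $\alpha$-normalized, because $\langle(\Pi_1)_*U_{\widehat{R}},\alpha_j\rangle=\langle U_{\widehat{R}},\Pi_1^*(\alpha_j)\rangle=\langle U_{\widehat{R}},\widehat{\alpha}_j\rangle=0$ for $1\le j\le h$. Therefore $\langle T,(\Pi_1)_*U_{\widehat{R}}\rangle=\mathcal{U}_T\big((\Pi_1)_*\widehat{R}\big)$, which is the claimed identity.

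It then suffices to prove that $\widehat{R}\mapsto\mathcal{U}_T\big((\Pi_1)_*\widehat{R}\big)$ is continuous for the $*$-topology, after which I would define the continuous extension of $\mathcal{U}_{\widehat{T}}$ by this very formula. For this I would show that the pushforward $(\Pi_1)_*\colon\mathcal{D}^0_{2n-p+1}(\widehat{X\times X})\to\mathcal{D}^0_{n-p+1}(X)$ is $*$-continuous. Weak continuity is clear from the properness of $\Pi_1$. For the mass control, note that $\Pi_1^*\omega$ is a smooth semi-positive $(1,1)$-form dominated by $C\widehat{\omega}$ for some constant $C$; thus, for a positive closed current $\widehat{R}$ of the relevant bidegree,
\[
\|(\Pi_1)_*\widehat{R}\|_*=\langle(\Pi_1)_*\widehat{R},\omega^{p-1}\rangle=\langle\widehat{R},(\Pi_1^*\omega)^{p-1}\rangle\le C^{p-1}\langle\widehat{R},\widehat{\omega}^{p-1}\rangle=C^{p-1}\|\widehat{R}\|_*,
\]
where the inequality uses the positivity of $\widehat{R}$ and the pointwise comparison of the wedge powers; the general case follows by splitting $\widehat{R}$ into its positive and negative parts. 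Combined with the continuity of $\mathcal{U}_T$, which $T$ enjoys by hypothesis, this yields the continuity of $\mathcal{U}_{\widehat{T}}$.

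I expect the essential difficulty to lie in the $*$-continuity of $(\Pi_1)_*$: one must secure both the uniform mass bound displayed above, where the comparison is between wedge powers rather than between $(1,1)$-forms and hence genuinely relies on the positivity of $\widehat{R}$, and the fact that the limit current $(\Pi_1)_*\widehat{R}$ indeed lands in the domain $\mathcal{D}^0_{n-p+1}(X)$ on which $\mathcal{U}_T$ is continuous. The projection formula itself raises no real difficulty, since $\langle\Pi_1^*(T),U_{\widehat{R}}\rangle=\langle T,(\Pi_1)_*U_{\widehat{R}}\rangle$ is the very relation defining the pullback $\Pi_1^*(T)$ of a current under the submersion $\Pi_1$, and $(\Pi_1)_*U_{\widehat{R}}$ is an admissible test form because $\Pi_1$ is a proper submersion; if one prefers, the identity can also be obtained by approximating $T$ by smooth forms in $\mathcal{D}_p$ via Proposition \ref{pro_convergence*} and passing to the limit.
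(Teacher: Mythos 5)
Your proof is correct and takes essentially the same route as the paper: both arguments reduce everything to the key identity $\mathcal{U}_{\widehat{T}}(\widehat{R})=\mathcal{U}_T\big((\Pi_1)_*\widehat{R}\big)$ on smooth forms and then conclude from the continuity of $\mathcal{U}_T$ together with the $*$-continuity of $(\Pi_1)_*$. The only cosmetic differences are that the paper derives the identity by pulling back a potential of $T-\alpha_T$ (rather than pushing forward the normalized potential of $\widehat{R}$ and checking it stays $\alpha$-normalized, as you do), and it proves the linear independence of the classes $\{\widehat{\alpha}_j\}$ by a wedge-and-pushforward contradiction instead of $\Pi_*\Pi^*=\id$ plus K\"unneth; your explicit mass estimate for $(\Pi_1)_*$ spells out a step the paper treats as immediate.
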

\begin{proof} Suppose that the classes $\{\widehat{\alpha}_j\}$ are linearly dependent. Then there exist real numbers $b_j$ with $1 \le j \le h$ which are not simultaneously equal to zero and a smooth form $\widehat{\gamma}$ such that $\sum_{j=1}^h b_j \widehat{\alpha}_j= d(\widehat{\gamma}).$ Taking the wedge product with $\widehat{\omega}^n$ in the last equality and then using the push-forward by $(\Pi_1)_*$  give
\begin{align}\label{equ_ngaiquatroilun}
\sum_{j=1}^h b_j  \alpha_j \wedge  (\Pi_1)_*(\widehat{\omega}^n) = d\big( (\Pi_1)_*( \widehat{\gamma} \wedge \widehat{\omega}^n) \big).
\end{align}
Note that $(\Pi_1)_* \widehat{\omega}^n$ is actually a nonzero constant since $\widehat{\omega}^n$ is closed and positive. We deduce that the left-hand side of (\ref{equ_ngaiquatroilun}) is a non-trivial linear combination of $\alpha_j,$ $1 \le j \le h.$ However this contradicts the fact that $\{\alpha_j\}$ are linearly independent. Hence, the classes $\{\widehat{\alpha}_j\}$ are linearly independent. Complete them to be basis  $\widehat{\alpha}'$ of $H^{p,p}(\widehat{X \times X}, \R).$ Let $\mathcal{U}_{\widehat{T}}$ be the $\widehat{\alpha}'$-normalized super-potential of $\widehat{T}.$  

Put $\alpha_T= \sum_{j=1}^h a_j \alpha_j$ and $\widehat{\alpha}_T= \Pi_1^* \alpha_T.$ Remark that $\alpha_T$ and $\widehat{\alpha}_T$ are in the same cohomology classes with $T$ and $\widehat{T}$ respectively. Let $U_{T-\alpha_T}$ be a potential of  $T-\alpha_T.$ Then $U_{\widehat{T}- \widehat{\alpha}_T}:=\Pi_1^* U_{T-\alpha_T}$ is a potential of $\widehat{T}- \widehat{\alpha}_T.$  By definition, for any smooth form $\tilde{R} \in  \mathcal{D}^0_{2n-p+1}(\widehat{X \times X}),$ we have 
$$\mathcal{U}_{\widehat{T}}(\tilde{R})= \langle \widehat{T}, U_{\tilde{R}}  \rangle=\langle \widehat{T}- \widehat{\alpha}_T, U_{\tilde{R}} \rangle= \langle U_{\widehat{T}- \widehat{\alpha}_T}, \tilde{R} \rangle$$
By our choice of potentials, the last quantity equals 
$$\langle U_{T-\alpha_T}, (\Pi_1)_*\tilde{R} \rangle= \mathcal{U}_T((\Pi_1)_*\tilde{R}).$$
The continuity of $\mathcal{U}_T$ now implies immediately the same property for $\mathcal{U}_{\widehat{T}}.$ The proof is finished. 
\end{proof}
Thanks to Lemma \ref{le_laiphaisuaconU_T}, one can define $\widehat{T} \wedge \widehat{S}$ as in (\ref{defi_wedgecurrents}).  Recall that $T \otimes S$ is a positive closed $(p+q,p+q)$-current on $X \times X$ depending continuously on $T$ and $S.$ Its action on smooth forms can be described as follows. 
Let $x$ be local coordinates of $X.$ They induce naturally local coordinates $(x,y)$ on $X \times X.$ For a smooth form $\Phi(x,y)$ of $X \times X,$ we have 
\begin{align} \label{equ_dinhnghiatenxo}
\langle T \otimes S, \Phi \rangle=\big \langle T, S\big(\Phi(x,\cdot)\big) \big \rangle=\big \langle S, T\big(\Phi(\cdot,y)\big) \big \rangle.
\end{align} 
Let $\Pi'$ be the restriction of $\Pi$ to 
$\widehat{X \times X} \backslash \widehat{\Delta}.$ The current
$$\widehat{T \otimes S}= \Pi'^{*} (T \otimes S)$$
 is well-defined and positive closed  on $\widehat{X \times X} \backslash \widehat{\Delta}$ because $\Pi'$ is biholomorphic. By Proposition 5.1 of \cite{DinhSibony_pullback},  the mass of $\widehat{T \otimes S}$ is bounded. Hence, it  can be extended by zero to be a positive closed current of $\widehat{X \otimes X}$ through $\widehat{\Delta},$ see \cite{Demailly_ag, Sibony, Skoda}. We still denote by $\widehat{T \otimes S}$ the extended current.   Take a smooth closed  $(1,1)$-form $\widehat{\beta}$ with $ \{\widehat{\beta}\}=\{ \widehat{\Delta}\}.$ Since $\widehat{\Delta}$ is a hypersurface, choose  a potential $\hat{u}= U_{[\widehat{\Delta}] - \widehat{\beta}}$ of  $[\widehat{\Delta}] - \widehat{\beta}$ as in Section \ref{sec2}. It is smooth outside $\widehat{\Delta}$ and its behaviour  near $\widehat{\Delta}$ is described by (\ref{ine_landautienvoiphi}). By adding a constant to $\hat{u}$ if necessary, we can assume that $\hat{u} \le -1.$ 
\begin{lemma} \label{le_ukhatichvoiS} The current $\hat{u} \widehat{S}$ is well-defined. Moreover, if smooth forms $S_k \in \mathcal{D}_q$ converge to $S$ in the $*$-topology,  then $\hat{u} \widehat{S}_k$ converge  weakly to $\hat{u} \widehat{S}.$ 
\end{lemma}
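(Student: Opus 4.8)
The plan is to build $\hat u\widehat S$ from bounded approximations of $\hat u$ and to control everything through fibre integration along the submersion $\Pi_2$. Since $dd^c\hat u=[\widehat\Delta]-\widehat\beta\ge-\widehat\beta$, the function $\hat u$ is $\widehat\beta$-plurisubharmonic, and the truncations $\hat u_m:=\max(\hat u,-m)$ form a sequence of bounded quasi-plurisubharmonic functions decreasing to $\hat u$ with $\hat u_m\le-1$. For each $m$ the product $\hat u_m\widehat S$ is a well-defined current, because $\hat u_m$ is bounded and $\widehat S$ is a positive current of order $0$. I would define $\hat u\widehat S$ as the limit of $\hat u_m\widehat S$, and the whole point is that this limit exists; by monotonicity of $m\mapsto|\hat u_m|$ this reduces to the integrability statement $\hat u\in L^1(\sigma_{\widehat S})$, where $\sigma_{\widehat S}:=\widehat S\wedge\widehat\omega^{2n-q}$ is the trace measure of $\widehat S$.

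To prove this integrability I would exploit that $\widehat S=\Pi_2^*S$ and that $\Pi_2$ is a proper submersion with $n$-dimensional fibres $F_y:=\Pi_2^{-1}(y)$. By the projection formula,
\[
\int_{\widehat{X\times X}}|\hat u|\,\sigma_{\widehat S}=\big\langle S,(\Pi_2)_*\big(|\hat u|\,\widehat\omega^{2n-q}\big)\big\rangle,
\]
so it suffices to see that the positive $(n-q,n-q)$-form $(\Pi_2)_*(|\hat u|\widehat\omega^{2n-q})$ has bounded continuous coefficients; pairing it with the order-$0$ current $S$ then gives a finite number. The coefficients are fibre integrals of the form $\int_{F_y}|\hat u|\,(\,\cdot\,)$. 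Here the geometry is favourable: the restriction of $\Pi_2$ to $\widehat\Delta$ is the bundle projection $\P(TX)\to X$, so $F_y\cap\widehat\Delta$ is a $\P^{n-1}$, a proper divisor of $F_y$; in particular no fibre is contained in $\widehat\Delta$. Consequently $\hat u|_{F_y}$ is quasi-plurisubharmonic with a genuine logarithmic pole along $F_y\cap\widehat\Delta$ (recall that $\hat u-\log|\sigma'|$ is smooth near $\widehat\Delta$ by (\ref{ine_landautienvoiphi})), hence $L^1$ on $F_y$, and a finite cover of $\widehat{X\times X}$ by charts adapted to $\widehat\Delta$ yields a bound on $\int_{F_y}|\hat u|\,\widehat\omega^n|_{F_y}$ uniform in $y$. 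This gives $\hat u\in L^1(\sigma_{\widehat S})$, and hence the existence of $\hat u\widehat S$, independent of the chosen approximation.

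For the continuity statement I would first record two consequences of the $*$-convergence $S_k\to S$: the masses $\|S_k\|$ stay bounded, and $\widehat S_k=\Pi_2^*S_k\to\widehat S$ weakly, the latter because $\langle\Pi_2^*S_k,\Psi\rangle=\langle S_k,(\Pi_2)_*\Psi\rangle$ for smooth $\Psi$ and $(\Pi_2)_*\Psi$ is smooth. Fix a test form $\Psi$. For each fixed $m$, smoothness of $\hat u_m\Psi$ gives $\langle\widehat S_k,\hat u_m\Psi\rangle\to\langle\widehat S,\hat u_m\Psi\rangle$ as $k\to\infty$. To pass from $\hat u_m$ to $\hat u$ uniformly in $k$, I would establish the uniform integrability
\[
\sup_k\int_{\{|\hat u|>R\}}|\hat u|\,\sigma_{\widehat S_k}\longrightarrow 0\qquad(R\to\infty).
\]
Running the projection-formula computation of the previous paragraph with $S_k$ in place of $S$ and inserting the cut-off $\ind_{\{|\hat u|>R\}}$, this tail equals $\langle S_k,(\Pi_2)_*(|\hat u|\,\ind_{\{|\hat u|>R\}}\widehat\omega^{2n-q})\rangle$; the coefficient form tends to $0$ uniformly on $X$ as $R\to\infty$ (its coefficients are continuous in $y$ and decrease to $0$ by the fibrewise $L^1$ bound, so Dini's theorem applies), and the uniform mass bound on $S_k$ then forces the whole expression to $0$ uniformly in $k$. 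Since $|\hat u-\hat u_m|\le|\hat u|\,\ind_{\{|\hat u|>m\}}$, this makes $\sup_k|\langle(\hat u-\hat u_m)\widehat S_k,\Psi\rangle|\to0$ as $m\to\infty$, and a standard exchange-of-limits argument yields $\langle\hat u\widehat S_k,\Psi\rangle\to\langle\hat u\widehat S,\Psi\rangle$.

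I expect the uniform integrability estimate to be the main obstacle. Everything rests on transferring the elementary logarithmic integrability on the $n$-dimensional fibres of $\Pi_2$ into a bound that is uniform both in the base point $y$ and in the approximating sequence $S_k$; this is precisely what allows one to commute the regularization limit in $\hat u$ with the weak limit $S_k\to S$, and it is also what secures the finiteness needed to define $\hat u\widehat S$ in the first place.
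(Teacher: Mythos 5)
Your strategy---truncate $\hat u$, reduce existence to $\hat u\in L^1(\sigma_{\widehat S})$ via the projection formula, and get continuity in $S$ from uniform integrability---rests on the same geometric fact as the paper's proof: the fibres $F_y=\Pi_2^{-1}(y)$ meet $\widehat\Delta$ in a proper analytic subset ($\P^{n-1}$ inside an $n$-dimensional fibre) along which $\hat u$ has only a logarithmic singularity, so fibre integrals of $|\hat u|$ are finite, uniformly in $y$. But the paper extracts more from this fact and thereby skips your truncation and uniform-integrability machinery entirely: working in the charts of the blow-up, where $\hat u=\log|x_1'|+(\text{smooth})$ by (\ref{ine_landautienvoiphi}), one sees that $(\Pi_2)_*(\hat u\,\hat\eta)$ is not merely bounded with continuous coefficients but \emph{smooth}, for every smooth form $\hat\eta$. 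One then simply \emph{defines} $\langle\hat u\widehat S,\hat\eta\rangle:=\langle S,(\Pi_2)_*(\hat u\,\hat\eta)\rangle$, as in (\ref{equ_le31}), and the second assertion of the lemma becomes immediate: it is just the weak convergence $S_k\to S$ tested against the fixed smooth form $(\Pi_2)_*(\hat u\,\hat\eta)$. Your ``charts adapted to $\widehat\Delta$'' argument is exactly this computation, so you already have the stronger conclusion in hand; settling for continuity of the pushforward is what forces you into the longer route.

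Moreover, two steps of that longer route are flawed as written. First, $\hat u_m=\max(\hat u,-m)$ is not smooth (only Lipschitz across $\{\hat u=-m\}$), so you cannot invoke ``smoothness of $\hat u_m\Psi$'' to pass from $\widehat S_k\to\widehat S$ to $\langle\widehat S_k,\hat u_m\Psi\rangle\to\langle\widehat S,\hat u_m\Psi\rangle$; you must either use a smooth truncation (such as $\chi(\hat u+m)-m$ with $\chi$ smooth convex, as the paper does later in Proposition \ref{pro_currentbangnhau}) or argue that weak convergence together with uniform mass bounds allows continuous test forms. Second, and more seriously, the Dini argument fails for sharp indicator cutoffs: the integrand $|\hat u|\,\ind_{\{|\hat u|>R\}}$ is only lower semicontinuous, hence so are the fibre integrals $y\mapsto\int_{F_y}|\hat u|\,\ind_{\{|\hat u|>R\}}\,\widehat\omega^{2n-q}$, and a decreasing sequence of lower semicontinuous functions tending pointwise to $0$ need not converge uniformly (consider $\ind_{(0,1/R)}$ on $[0,1]$); Dini requires upper semicontinuity. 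The repair is to replace $\ind_{\{|\hat u|>R\}}$ by continuous cutoffs $\theta_R(\hat u)$ squeezed between $\ind_{\{|\hat u|>R+1\}}$ and $\ind_{\{|\hat u|>R\}}$ and monotone in $R$: then each truncated fibre integral is the difference of the full fibre integral of $|\hat u|$ (continuous, by the chart computation) and the fibre integral of a bounded continuous function (continuous by dominated convergence), so Dini applies. With these two repairs your proof goes through, but it remains considerably more involved than the paper's duality argument.
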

\begin{proof} 
We prove the first assertion.   For any smooth $(2n-q,2n-q)$-form $\hat{\eta}$ on $\widehat{X \times X},$ we will show that $(\Pi_2)_{*}( \hat{u} \hat{\eta})$ is  a smooth form on $X.$ This allows us to  define  
\begin{align} \label{equ_le31}
\langle \hat{u} \widehat{S}, \hat{\eta}   \rangle= \langle S, (\Pi_2)_*( \hat{u} \hat{\eta}) \rangle.
\end{align}
To see that $(\Pi_2)_{*}( \hat{u} \hat{\eta})$ is smooth, we just need to work locally. Consider local coordinates $(W,x=(x_1,\cdots, x_n))$ on a chart $W$ of $X.$ Without loss of generality, we can suppose $W$ is diffeomorphic  to the unit ball $\B_1$ in $\C^n.$ Consider induced local coordinates $(x,y)$ on $W \times W.$ We have  $\Delta \cap (W \times W)= \{ x=y\}.$ Define new local coordinates $(x',y)$ on $W \times W$ by putting $x':= x-y.$ Hence $\Delta$ is given by the equation $x'=0.$ The set $\Pi^{-1}(W \times W)$ is biholomorphic to the manifold $M$ in $\C^n \times \C^n \times \P^{n-1}$ defined by 
$$M= \big \{(x', y, [v]):  y \in \B_1,\, x'+y \in \B_1,\,  [v] \in  \P^{n-1} \text{ and } x' \in [v] \big \},$$
where $[v]=[v_1 : v_2: \cdots :v_{n}]$ denotes the homogeneous coordinates of $\P^{n-1}.$  
 Let $M_j$ ($1 \le j \le n$) be the open subset of $M$ containing all points $(x',y,[v]) \in M$ with $v_j \not =0.$  They form an open covering of $M.$ For $(x',y,[v]) \in M_1,$ we have $x'_1 v_j = x'_j v_1.$ Choose $v_1=1,$ then $x'_j= x'_1 v_j.$ We deduce that  $\big(x'_1, v_2, \cdots, v_{n}, y\big)$ are coordinates on $M_1$ and  $\widehat{\Delta} \cap  M_1=\{x'_1=0\}.$  
Since $\Pi_2(x'_1, v_2,\cdots, v_n,y)= y,$ we see  that
\begin{align*}
(\Pi_2)_{*}( \hat{u} \hat{\eta}) &= \int_{x_1',v_2,\cdots, v_n} \hat{u}(x'_1, v_2,\cdots, v_n,y)\hat{\eta}(x'_1, v_2,\cdots, v_n,y)\\
&= \int_{x_1',v_2,\cdots, v_n} \log|x'_1| \hat{\eta}(x'_1, v_2,\cdots, v_n,y) \\
&+\quad \int_{x_1',v_2,\cdots, v_n} \hat{u}'(x'_1, v_2,\cdots, v_n,y) \hat{\eta}(x'_1, v_2,\cdots, v_n,y),
\end{align*}
where $ \hat{u}'(x'_1, v_2,\cdots, v_n,y)$ is a smooth function,  see (\ref{ine_landautienvoiphi}). This implies that the last integral defines a smooth form in $y.$ It is also clear that the integral involving $\log|x'_1|$ depends smoothly in $y$. The proof of the first assertion is finished. The second assertion is a direct consequence of the identity (\ref{equ_le31}).
 The proof is finished.
\end{proof}
\begin{proposition} \label{pro_currentbangnhau} We have $\widehat{T} \wedge \widehat{S}= \widehat{T \otimes S}.$
\end{proposition}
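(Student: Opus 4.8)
The plan is to prove the identity by reducing to the case where $S$ is smooth and then passing to the limit, exploiting that both sides depend continuously on $S$ for the $*$-topology. By Proposition \ref{pro_convergence*} I can choose smooth forms $S_k \in \mathcal{D}_q$ converging weakly to $S$ with uniformly bounded $*$-norms, so it suffices to (i) establish $\widehat{T} \wedge \widehat{S} = \widehat{T \otimes S}$ when $S$ is smooth, and (ii) show that each side is $*$-continuous in $S$.

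For step (i), when $S$ is smooth the form $\widehat{S} = \Pi_2^*(S)$ is smooth, since $\Pi_2$ is a submersion. In this case the Dinh--Sibony product (\ref{defi_wedgecurrents}) reduces to the ordinary wedge product of the positive closed current $\widehat{T}$ with the smooth closed form $\widehat{S}$: concretely, $\Phi \wedge \widehat{S}$ is, up to an $\widehat{\alpha}'$-normalizing closed form whose contribution is cancelled by the term $\sum_j a_j \langle \widehat{\alpha}_j, \Phi \wedge \widehat{S}\rangle$, a potential of $dd^c \Phi \wedge \widehat{S}$, so that $\langle \widehat{T} \wedge \widehat{S}, \Phi\rangle = \langle \widehat{T}, \widehat{S} \wedge \Phi\rangle$. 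On the other hand $\widehat{T} \wedge \widehat{S} = \Pi_1^*(T) \wedge \Pi_2^*(S) = \Pi^*(\pi_1^* T \wedge \pi_2^* S) = \Pi^*(T \otimes S)$, and on $\widehat{X \times X} \setminus \widehat{\Delta}$ this coincides with $\Pi'^{*}(T \otimes S)$. It then remains to verify that $\widehat{T} \wedge \widehat{S}$ puts no mass on $\widehat{\Delta}$; granting this, it equals the trivial extension $\widehat{T \otimes S}$. The no-mass claim should follow because $\widehat{\Delta}$ meets each fibre of the submersion $\Pi_1$ in a set of measure zero, so the pullback $\Pi_1^*(T)$ — and hence its product with the smooth form $\widehat{S}$ — cannot charge $\widehat{\Delta}$.

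For step (ii), the continuity of the left-hand side is a consequence of the Dinh--Sibony theory: since $\widehat{T}$ has a continuous super-potential (Lemma \ref{le_laiphaisuaconU_T}), the current $\widehat{T} \wedge \widehat{S}$ depends continuously on $\widehat{S}$ in the $*$-topology, and $S_k \to S$ in the $*$-topology forces $\widehat{S}_k = \Pi_2^*(S_k) \to \widehat{S}$ in the $*$-topology because pullback by the submersion $\Pi_2$ is bounded for the $*$-norm. The continuity of the right-hand side is more delicate, and I expect it to be the main obstacle. We know $T \otimes S_k \to T \otimes S$ weakly, the masses of $\widehat{T \otimes S_k}$ are uniformly bounded by Proposition 5.1 of \cite{DinhSibony_pullback}, and $\widehat{T \otimes S_k} \to \Pi'^{*}(T \otimes S)$ on $\widehat{X \times X} \setminus \widehat{\Delta}$; the issue is to rule out that mass concentrates on $\widehat{\Delta}$ in the limit, since each $\widehat{T \otimes S_k}$ carries no mass there while mass on a closed set is only upper semicontinuous. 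The plan is to control the mass of $\widehat{T \otimes S_k}$ in shrinking neighbourhoods of $\widehat{\Delta}$ uniformly in $k$, using the potential $\hat{u}$ of $[\widehat{\Delta}] - \widehat{\beta}$ and the same local computation as in Lemma \ref{le_ukhatichvoiS}, together with the continuity of the super-potential of $\widehat{T}$. Once this uniform non-concentration is established, the trivial extension operation is continuous, so $\widehat{T \otimes S_k} \to \widehat{T \otimes S}$; comparing this with step (i) and the continuity of the left-hand side then yields $\widehat{T} \wedge \widehat{S} = \widehat{T \otimes S}$.
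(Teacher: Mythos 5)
Your step (i) and the continuity of the left-hand side in step (ii) match the paper's proof, and your diagnosis of the crux --- that mass of $\widehat{T \otimes S_k}$ could concentrate on $\widehat{\Delta}$ in the limit, since mass on a closed set is only upper semicontinuous --- is exactly right. But that crux is precisely where your proposal stops: the ``plan'' to control the mass of $\widehat{T \otimes S_k}$ in shrinking neighbourhoods of $\widehat{\Delta}$ uniformly in $k$ is a restatement of what must be proved, not an argument. Nothing in your write-up explains how the potential $\hat{u}$ and the continuity of $\mathcal{U}_{\widehat{T}}$ actually produce such a uniform bound, and this estimate is the entire technical content of the proposition; without it you have only established equality of the two currents on $\widehat{X \times X} \setminus \widehat{\Delta}$.

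For comparison, the paper does not prove $*$-continuity of $S \mapsto \widehat{T \otimes S}$ at all; it sidesteps it. From the smooth case and the continuity of the left-hand side it gets $\widehat{T} \wedge \widehat{S} = \widehat{T \otimes S}$ outside $\widehat{\Delta}$, and then shows directly that $\mathbf{1}_{\widehat{\Delta}}(\widehat{T} \wedge \widehat{S}) = 0$; since the trivial extension has no mass on $\widehat{\Delta}$ by construction, equality follows. The no-mass statement is proved by: reducing to positive currents via $S = S^+ - S^-$, so that it suffices to test against $\widehat{\Phi} = \widehat{\omega}^{2n-p-q}$; introducing the smooth quasi-p.s.h.\ cutoffs $\hat{u}_k = \chi(\hat{u}+k)-k$, for which $-\hat{u}_k/k$ decreases to $\mathbf{1}_{\widehat{\Delta}}$ and $dd^c \hat{u}_k \ge -c\,\widehat{\omega}$, whence $dd^c\hat{u}_k$ is $*$-bounded uniformly in $k$; and applying the defining formula (\ref{defi_wedgecurrents}) to $\big\langle \widehat{T} \wedge \widehat{S}, -\tfrac{\hat{u}_k}{k}\,\widehat{\omega}^{2n-p-q}\big\rangle$, whose two terms are then $O(1/k)$ by the continuity of $\mathcal{U}_{\widehat{T}}$ (applied to $\widehat{S}\wedge dd^c\hat{u}_k \wedge \widehat{\omega}^{2n-p-q}$, which converges in the $*$-topology) and by the uniform mass bound on $\hat{u}_k \widehat{S}$ from Lemma \ref{le_ukhatichvoiS}. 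If you wanted to salvage your route instead, the same three ingredients (positivity reduction, the cutoffs $\hat{u}_k$, and boundedness of $\mathcal{U}_{\widehat{T}}$ on $*$-bounded sets) applied uniformly to the approximants $\widehat{T}\wedge\widehat{S}_k$ would yield the non-concentration you need --- but some version of this computation is unavoidable, and it is absent from your proposal.
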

\begin{proof} 
Consider first the case where $S$ is smooth. So  $\widehat{T} \wedge \widehat{S}$ is the usual wedge product of a current with a smooth form. We then see that 
$\widehat{T} \wedge \widehat{S}= \Pi^*( T \otimes S)= \widehat{T \otimes S}$ outside $\widehat{\Delta}.$ Observe that the fibers of the submersion $\Pi_1$ are transverse to $\widehat{\Delta}.$ Therefore, $\widehat{T}$ has no mass on $\widehat{\Delta}.$ Hence, $\widehat{T} \wedge \widehat{S}$ has no mass on $\widehat{\Delta}.$ We deduce that $\widehat{T} \wedge \widehat{S}= \widehat{T \otimes S}$ in this case because  $ \widehat{T \otimes S}$ has no mass on  $\widehat{\Delta}$ by definition. 

In general,  by Proposition \ref{pro_convergence*}, there is a sequence of smooth forms $S_k \in \mathcal{D}_q$ converging to $S$ in the $*$-topology. The first case and the continuity on $S$ imply that  $\widehat{T} \wedge \widehat{S}= \widehat{T \otimes S}$ outside $\widehat{\Delta}.$ It remains to show that the restriction $\mathbf{1}_{\widehat{\Delta}}(\widehat{T} \wedge \widehat{S})$ of $\widehat{T} \wedge \widehat{S}$ vanishes. This is equivalent to say that 
\begin{align} \label{ine_thuxemnao}
\int_{\widehat{\Delta}}\widehat{T} \wedge \widehat{S} \wedge \widehat{\Phi}=0,
\end{align}
for any  smooth form $\widehat{\Phi}$ of bidegree $2n-p-q$. By Proposition \ref{pro_convergence*}, we can write $S= S^+ - S^-$ where $S^+$ and $S^-$ are approximable by smooth positive closed forms. Since $\widehat{T} \wedge \widehat{S}=\widehat{T} \wedge \widehat{S}^+-\widehat{T} \wedge \widehat{S}^-,$ we only need to verify that  $\mathbf{1}_{\widehat{\Delta}}(\widehat{T} \wedge \widehat{S}^{\pm})=0.$ Therefore, without loss of generality, assume that $\widehat{T} \wedge \widehat{S}$ is positive. Consequently, it suffices to prove (\ref{ine_thuxemnao}) for $\widehat{\Phi}= \widehat{\omega}^{2n-p-q}.$

Let $\chi$ be a convex increasing smooth function on $\R$ such that $\chi(t)=0$ if $t \le -1/4,$ $\chi(t)=t$ for $t\ge 1/4$ and $0 \le \chi' \le 1.$  For each positive integer $k,$ put 
$$\hat{u}_k= \chi(\hat{u}+k) - k.$$
This is a smooth  negative quasi-p.s.h. function since $\widehat{u} \le -1$. The functions $\hat{u}_k$ decrease to $\hat{u}$ and $-\hat{u}_k/k $ decrease to  the characteristic function $\mathbf{1}_{\widehat{\Delta}}$ of $\widehat{\Delta}$ as  $k \rightarrow \infty.$ The first property implies that $\widehat{S} \wedge dd^c \hat{u}_k$ converges weakly to $\widehat{S} \wedge dd^c \hat{u},$ see Lemma \ref{le_ukhatichvoiS}. We also have 
 $$dd^c \hat{u}_k= [\chi'(\hat{u}+k)]^2 d \hat{u} \wedge d^c \hat{u}+ \chi''(\hat{u}+k) dd^c \hat{u} \ge \chi''(\hat{u}+k) dd^c \hat{u}\ge -c \widehat{\omega},$$
 for some positive constant $c.$  This yields that $dd^c \hat{u}_k=(dd^c \hat{u}_k+ c \widehat{\omega}) - c\widehat{\omega}$ which is the difference of two positive closed currents in the same cohomology class $c \{\widehat{\omega}\}$. 
We deduce that $dd^c \hat{u}_k$ is $*$-bounded uniformly in $k$ and then so is $\widehat{S} \wedge dd^c \hat{u}_k \wedge  \widehat{\omega}^{2n-p-q}$ because we have
\begin{align} \label{ine_estimateon*norm}
\|\widehat{S} \wedge dd^c \hat{u}_k \wedge  \widehat{\omega}^{2n-p-q}\|_{*} \le c \|S\|_{*} \|dd^c \hat{u}_k\|_*, 
\end{align}
for a positive constant $c$ depending only on $(X, \omega).$  It follows that 
$$\widehat{S} \wedge dd^c \hat{u}_k \wedge  \widehat{\omega}^{2n-p-q} \rightarrow \widehat{S} \wedge dd^c \hat{u} \wedge  \widehat{\omega}^{2n-p-q}$$
in the $*$-topology. The equality (\ref{ine_thuxemnao}) with $\widehat{\Phi}= \widehat{\omega}^{2n-p-q}$  is equivalent to
\begin{align} \label{ine_thuxemnao2}
\big \langle \widehat{T} \wedge \widehat{S}, -\frac{\hat{u}_k}{k} \cdot \widehat{\omega}^{2n-p-q} \big \rangle \rightarrow 0 \text{ as  } k   \rightarrow \infty.
\end{align}
Applying the formula (\ref{defi_wedgecurrents}) to $\widehat{T} \wedge \widehat{S}$ gives 
\begin{align*}
\big \langle \widehat{T} \wedge \widehat{S}, -\frac{\hat{u}_k}{k} \cdot \widehat{\omega}^{2n-p-q} \big \rangle = -\frac{1}{k}\mathcal{U}_{\widehat{T}}\big(\widehat{S} \wedge dd^c\hat{u}_k \wedge  \widehat{\omega}^{2n-p-q}\big)- \frac{1}{k} \big \langle \widehat{\alpha}_T,  \hat{u}_k\widehat{S} \wedge \widehat{\omega}^{2n-p-q}  \big \rangle,
\end{align*}
where $\widehat{\alpha}_T= \sum_{j=1}^h a_j \widehat{\alpha}_j$. The last quantity converges to $0$ as $k \rightarrow \infty$ for the mass norm of $\hat{u}_k\widehat{S}$ is bounded independently of $k$ by Lemma \ref{le_ukhatichvoiS}. On the other hand, the continuity of $\mathcal{U}_{\widehat{T}}$ gives  
$$\mathcal{U}_{\widehat{T}}\big(\widehat{S} \wedge dd^c \hat{u}_k \wedge  \widehat{\omega}^{2n-p-q}\big) \rightarrow \mathcal{U}_{\widehat{T}}\big(\widehat{S} \wedge dd^c \hat{u} \wedge  \widehat{\omega}^{2n-p-q}\big)$$
which is finite, as $k \rightarrow \infty$. Hence we get  (\ref{ine_thuxemnao2}). The proof is finished.
\end{proof}
\begin{lemma} \label{co_ukhatichdoivoiTS} 
The current $\hat{u}(\widehat{T} \wedge \widehat{S})$ is well-defined. Denote it by $\hat{u}\widehat{T} \wedge \widehat{S}$ for simplicity. 
For any closed real smooth form $\widehat{\Phi}$ of $\widehat{X \times X}$ of the right bidegree, we have 
\begin{align}\label{eq_congthuchoPhi}
\langle \hat{u}\widehat{T} \wedge \widehat{S}, \widehat{\Phi}\rangle = \mathcal{U}_{\widehat{T}}\big( dd^c( \hat{u}\widehat{S}  \wedge \widehat{\Phi})  \big)+
\sum_{j=1}^h a_j \langle \widehat{S},  \hat{u} \widehat{\alpha}_j \wedge \widehat{\Phi}\rangle.
\end{align}
In particular, $\langle \hat{u}\widehat{T} \wedge \widehat{S}, \widehat{\Phi}\rangle$ depends continuously on $S.$
\end{lemma}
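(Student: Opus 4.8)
The plan is to reduce to the case where $S$ is positive, to define the product $\hat u(\widehat T\wedge\widehat S)$ by an $L^1$-integrability argument, and then to obtain (\ref{eq_congthuchoPhi}) by passing to the limit in the truncations $\hat u_k$ introduced in the proof of Proposition \ref{pro_currentbangnhau}. First I would reduce to $S$ positive: writing $S=S^+-S^-$ with $S^\pm\in\mathcal{C}_q$ as in Proposition \ref{pro_convergence*} and setting $\hat u(\widehat T\wedge\widehat S):=\hat u(\widehat T\wedge\widehat{S^+})-\hat u(\widehat T\wedge\widehat{S^-})$, one checks using $T\otimes(A+B)=T\otimes A+T\otimes B$ together with Proposition \ref{pro_currentbangnhau} that this is independent of the decomposition, so it suffices to treat $S$ positive. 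In that case $R:=\widehat T\wedge\widehat S=\widehat{T\otimes S}$ is a positive closed current by Proposition \ref{pro_currentbangnhau}. Let $\chi$ and $\hat u_k=\chi(\hat u+k)-k$ be as in the proof of Proposition \ref{pro_currentbangnhau}; these are smooth, negative, and decrease to $\hat u$, so $-\hat u_k\uparrow-\hat u\ge0$. By monotone convergence for the positive measure $R\wedge\widehat\omega^{\,2n-p-q}$, the integral $\int(-\hat u)\,R\wedge\widehat\omega^{\,2n-p-q}$ equals $\lim_k\langle-\hat u_k\,R,\widehat\omega^{\,2n-p-q}\rangle$, which is finite by the computation below since $\widehat\omega^{\,2n-p-q}$ is closed. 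Hence $\hat u$ lies in $L^1$ of the trace measure of $R$, so $\hat u\,R=\hat u(\widehat T\wedge\widehat S)$ is a well-defined current, and $\langle\hat u_k R,\widehat\Psi\rangle\to\langle\hat u R,\widehat\Psi\rangle$ for every smooth $\widehat\Psi$ by dominated convergence.

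To prove (\ref{eq_congthuchoPhi}), fix a closed real smooth $\widehat\Phi$ of bidegree $(2n-p-q,2n-p-q)$. Since $\hat u_k\widehat\Phi$ is smooth, the definition (\ref{defi_wedgecurrents}) gives
\begin{align*}
\langle\hat u_k R,\widehat\Phi\rangle
&=\langle\widehat T\wedge\widehat S,\hat u_k\widehat\Phi\rangle\\
&=\mathcal{U}_{\widehat T}\big(dd^c(\hat u_k\widehat\Phi)\wedge\widehat S\big)+\sum_{j=1}^h a_j\langle\widehat\alpha_j,\hat u_k\widehat\Phi\wedge\widehat S\rangle.
\end{align*}
As $\widehat\Phi$ is closed of pure bidegree and $\widehat S$ is closed, $dd^c(\hat u_k\widehat\Phi)\wedge\widehat S=dd^c(\hat u_k\widehat S\wedge\widehat\Phi)$, a current of bidegree $(2n-p+1,2n-p+1)$ lying in the domain of $\mathcal{U}_{\widehat T}$. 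Letting $k\to\infty$, the left side tends to $\langle\hat u R,\widehat\Phi\rangle$, the elementary sum tends to $\sum_j a_j\langle\widehat S,\hat u\,\widehat\alpha_j\wedge\widehat\Phi\rangle$ because $\hat u_k\widehat S\to\hat u\widehat S$ with bounded mass (Lemma \ref{le_ukhatichvoiS}), and the super-potential term tends to $\mathcal{U}_{\widehat T}\big(dd^c(\hat u\widehat S\wedge\widehat\Phi)\big)$ by continuity of $\mathcal{U}_{\widehat T}$. This yields (\ref{eq_congthuchoPhi}) for positive $S$, hence for all $S$ by linearity; the choice $\widehat\Phi=\widehat\omega^{\,2n-p-q}$ also supplies the finiteness used in the first paragraph.

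The crux in both limits is the $*$-convergence $dd^c(\hat u_k\widehat S\wedge\widehat\Phi)\to dd^c(\hat u\widehat S\wedge\widehat\Phi)$, which is what is needed since $\mathcal{U}_{\widehat T}$ is only $*$-continuous. Weak convergence is immediate; for the uniform $*$-bound I would argue as in Proposition \ref{pro_currentbangnhau}, writing $dd^c\hat u_k=(dd^c\hat u_k+c\widehat\omega)-c\widehat\omega$ as a difference of positive closed currents in the fixed class $c\{\widehat\omega\}$, so that $\|dd^c\hat u_k\|_*$ is bounded and (\ref{ine_estimateon*norm}), with $\widehat\Phi$ in place of $\widehat\omega^{\,2n-p-q}$, bounds $\|dd^c(\hat u_k\widehat S\wedge\widehat\Phi)\|_*$ uniformly. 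Finally, continuity in $S$ is read off (\ref{eq_congthuchoPhi}): for $S_m\to S$ in the $*$-topology, Lemma \ref{le_ukhatichvoiS} gives $\hat u\widehat S_m\to\hat u\widehat S$ weakly, settling the elementary sum, while the super-potential term requires $dd^c(\hat u\widehat S_m\wedge\widehat\Phi)\to dd^c(\hat u\widehat S\wedge\widehat\Phi)$ in the $*$-topology. Here the uniform $*$-bound follows by writing $dd^c(\hat u\widehat S_m)=(dd^c\hat u+c\widehat\omega)\wedge\widehat S_m-c\widehat\omega\wedge\widehat S_m$ as a difference of positive closed currents whose masses are cohomological pairings controlled by $\{\widehat S_m\}=\Pi_2^*\{S_m\}$, hence by $\|S_m\|_*$. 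Controlling $dd^c\hat u\wedge\widehat S_m\wedge\widehat\Phi$ uniformly when $\widehat S_m$ is singular along $\widehat\Delta$ is the point I expect to be the main technical obstacle; once it is in hand, continuity of $\mathcal{U}_{\widehat T}$ (Lemma \ref{le_laiphaisuaconU_T}) concludes the proof.
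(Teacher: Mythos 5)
Your treatment of the well-definedness and of formula (\ref{eq_congthuchoPhi}) is essentially correct and coincides with the paper's: you test (\ref{defi_wedgecurrents}) against $\hat u_k\widehat\Phi$, use the uniform bound $\|dd^c(\hat u_k\widehat S\wedge\widehat\Phi)\|_*\le c\|S\|_*\|dd^c\hat u_k\|_*$ coming from (\ref{ine_estimateon*norm}), and pass to the limit in $k$ via the $*$-continuity of $\mathcal{U}_{\widehat T}$. One small omission: (\ref{ine_estimateon*norm}) with $\widehat\Phi$ in place of $\widehat\omega^{2n-p-q}$ requires $\widehat\Phi$ to be positive, so you should first reduce to positive closed $\widehat\Phi$ by writing any closed real smooth form as a difference of two positive closed forms, as the paper does.

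The genuine gap is in the continuity in $S$, exactly at the point you yourself flag as ``the main technical obstacle'': the uniform $*$-bound on $dd^c(\hat u\widehat S_m\wedge\widehat\Phi)$. Your proposed decomposition $dd^c(\hat u\widehat S_m)=(dd^c\hat u+c\widehat\omega)\wedge\widehat S_m-c\widehat\omega\wedge\widehat S_m$ cannot be used as written: since $dd^c\hat u=[\widehat\Delta]-\widehat\beta$, the term $dd^c\hat u\wedge\widehat S_m$ involves the product $[\widehat\Delta]\wedge\widehat S_m$ of two singular positive closed currents, which is not a priori defined, and proving that it is a positive closed current with cohomologically computable mass is a problem of the same nature as Corollary \ref{co_giaovoiduongcheo} itself --- so this step is circular unless you supply a separate argument. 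The paper avoids the issue with a two-parameter approximation. For the smooth truncations $\hat u_k$ and the approximating currents $S_l$, the estimate (\ref{ine_estimateon*norm}) gives $\|dd^c(\hat u_k\widehat S_l\wedge\widehat\Phi)\|_*\le c\|S_l\|_*\|dd^c\hat u_k\|_*$, which is bounded uniformly in \emph{both} $k$ and $l$, all wedge products here being classical because $\hat u_k$ is smooth. Then, letting $k\to\infty$ with $l$ fixed, one has the weak convergence $dd^c(\hat u_k\widehat S_l\wedge\widehat\Phi)\to dd^c(\hat u\widehat S_l\wedge\widehat\Phi)$ (this is (\ref{eq_limittheokl}), a consequence of Lemma \ref{le_ukhatichvoiS}); since the $*$-norm does not increase under weak limits of sequences with uniformly bounded $*$-norm (by compactness of positive closed currents applied to the decompositions realizing the $*$-norms), the bound survives the limit and $\|dd^c(\hat u\widehat S_l\wedge\widehat\Phi)\|_*$ is bounded uniformly in $l$. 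Combined with the weak convergence in $l$, this gives $*$-convergence $dd^c(\hat u\widehat S_l\wedge\widehat\Phi)\to dd^c(\hat u\widehat S\wedge\widehat\Phi)$, and the continuity of $\mathcal{U}_{\widehat T}$ concludes. This passage of the uniform bound through the $k$-limit is the one idea missing from your proposal.
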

\begin{proof} Using the computation in the proof of Proposition \ref{pro_currentbangnhau}, we have 
$$\langle \widehat{T} \wedge \widehat{S}, \hat{u} \cdot \widehat{\omega}^{2n-p-q} \rangle=\lim_{k \rightarrow \infty}\mathcal{U}_{\widehat{T}}\big(\widehat{S} \wedge dd^c \hat{u}_k \wedge  \widehat{\omega}^{2n-p-q}\big)+\big \langle \widehat{\alpha}_T,  \hat{u}_k\widehat{S} \wedge \widehat{\omega}^{2n-p-q}  \big \rangle,$$
where $\hat{u}_k$ is defined as in Proposition \ref{pro_currentbangnhau}. The same arguments at the end of the above proposition show that the last limit is finite. The first assertion follows.
Note that each smooth closed form $\Phi$ can be written as the difference of two positive closed forms. Hence it is enough to prove (\ref{eq_congthuchoPhi}) for positive closed forms $\Phi.$ The computations in Proposition   \ref{pro_currentbangnhau} still hold for $\Phi$ in place of $\widehat{\omega}^{2n -p-q}.$ Hence (\ref{eq_congthuchoPhi}) follows.

In order to prove the last assertion, it is enough to prove it for positive closed forms $\Phi$ by the same reason as above. Let $\{S_l\}_{l\in \N}$ be a sequence of currents in $\mathcal{D}_q$ which converges to $S$ in the $*$-topology. Put $\widehat{S}_l= \Pi_2^*(S_l).$  It is clear that $\widehat{S}_l$ converges to $\widehat{S}$ in the $*$-topology. Lemma \ref{le_ukhatichvoiS} implies that $dd^c( \hat{u}\widehat{S}_l  \wedge \widehat{\Phi})$ converges weakly to $dd^c( \hat{u}\widehat{S}  \wedge \widehat{\Phi})$ and 
\begin{align} \label{eq_limittheokl}
\lim_{k \rightarrow \infty}dd^c( \hat{u}_k \widehat{S}_l  \wedge \widehat{\Phi})= dd^c( \hat{u} \widehat{S}_l  \wedge \widehat{\Phi}),
\end{align}
for any $l \in \N.$ Applying (\ref{ine_estimateon*norm}) to $S_k$ in place of $S$, we see that the mass of $dd^c( \hat{u}_k \widehat{S}_l  \wedge \widehat{\Phi})$ is bounded independently of $k$ and $l.$ This combined with (\ref{eq_limittheokl}) yields that the $*$-norm of $dd^c( \hat{u} \widehat{S}_l  \wedge \widehat{\Phi})$ is bounded independently of $l.$ We deduces that  $dd^c( \hat{u}\widehat{S}_l  \wedge \widehat{\Phi})$ converges to $dd^c( \hat{u}\widehat{S}  \wedge \widehat{\Phi})$ in the 
$*$-topology.  The continuity of $\mathcal{U}_{\widehat{T}}$ now implies that  the right-hand side of  (\ref{eq_congthuchoPhi}) depends continuously on $S.$ The proof is finished.

\end{proof}
 \begin{corollary} \label{co_giaovoiduongcheo}  Define the intersection $\widehat{T \otimes S} \wedge [\widehat{\Delta}]$ by putting
\begin{align} \label{defi_(1,1)current}
 \widehat{T \otimes S} \wedge [\widehat{\Delta}] = dd^c\big( \hat{u}\widehat{T \otimes S} \big)+ \widehat{T \otimes S} \wedge \beta.
\end{align}
Then $\widehat{T \otimes S} \wedge [\widehat{\Delta}]$ is positive when $S$ is positive.
\end{corollary}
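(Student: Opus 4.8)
The plan is to reduce the positivity, which is a purely local property, to the classical positivity of the intersection of a positive closed current with a smooth hypersurface computed through a genuinely plurisubharmonic local potential. First I would record that the object is well defined: since $S$ is positive and $T$ is positive, $T\otimes S$ is positive closed, so the current $\widehat{T\otimes S}$, obtained by pulling $T\otimes S$ back by the biholomorphism $\Pi'$ and extending by zero across $\widehat{\Delta}$, is positive closed (the extension being legitimate and of finite mass as recalled above, via \cite{DinhSibony_pullback}). Moreover $\hat{u}\,\widehat{T\otimes S}=\hat{u}\,(\widehat{T}\wedge\widehat{S})$ is a well-defined current of order zero by Proposition \ref{pro_currentbangnhau} together with Lemma \ref{co_ukhatichdoivoiTS}, so the right-hand side of (\ref{defi_(1,1)current}) makes sense.

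The key observation driving the proof is that globally $\hat{u}$ is only quasi-plurisubharmonic (one has $dd^c\hat{u}=[\widehat{\Delta}]-\widehat{\beta}$, and $\widehat{\beta}$ need not be negative), so one cannot read off positivity from the global formula directly. Since positivity of a current is nonetheless a local matter, I would next fix a point of $\widehat{\Delta}$ and a small chart $W$ on which $\widehat{\Delta}\cap W=\{\sigma'=0\}$ for a holomorphic defining function $\sigma'$; away from $\widehat{\Delta}$ there is nothing to check, as $\widehat{T\otimes S}$ is positive closed there and $[\widehat{\Delta}]$ vanishes. By (\ref{ine_landautienvoiphi}) the function $g:=\hat{u}-\log|\sigma'|$ is smooth on $W$, and applying $dd^c$ together with the Poincar\'e--Lelong identity $dd^c\log|\sigma'|=[\widehat{\Delta}]$ on $W$ gives $dd^c g=-\widehat{\beta}$ there. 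Because $\widehat{T\otimes S}$ is closed and $g$ is smooth, $dd^c(g\,\widehat{T\otimes S})=dd^c g\wedge\widehat{T\otimes S}=-\widehat{\beta}\wedge\widehat{T\otimes S}$, so substituting into (\ref{defi_(1,1)current}) the two $\widehat{\beta}$-terms cancel and leave
$$\widehat{T\otimes S}\wedge[\widehat{\Delta}]=dd^c\big((\log|\sigma'|)\,\widehat{T\otimes S}\big)\qquad\text{on }W.$$

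Finally I would establish positivity of this local expression by plurisubharmonic approximation. Choose smooth plurisubharmonic functions $\psi_k$ decreasing to $\log|\sigma'|$ on a slightly smaller chart. Since $\widehat{T\otimes S}$ is positive of order zero and $\log|\sigma'|$ is integrable against it (this is exactly what makes $\hat{u}\,\widehat{T\otimes S}$ well defined, $\log|\sigma'|$ differing from $\hat{u}$ by a smooth function on $W$), monotone convergence yields $\psi_k\,\widehat{T\otimes S}\to(\log|\sigma'|)\,\widehat{T\otimes S}$ weakly, hence $dd^c(\psi_k\,\widehat{T\otimes S})\to dd^c((\log|\sigma'|)\,\widehat{T\otimes S})$. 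Each term $dd^c(\psi_k\,\widehat{T\otimes S})=dd^c\psi_k\wedge\widehat{T\otimes S}$ is positive, being the wedge of a positive smooth form with a positive closed current, so the limit is positive on $W$, and therefore $\widehat{T\otimes S}\wedge[\widehat{\Delta}]$ is positive everywhere. The main obstacle is precisely the passage across $\widehat{\Delta}$: one must check that the well-definedness and local integrability furnished by Lemmas \ref{le_ukhatichvoiS} and \ref{co_ukhatichdoivoiTS} suffice to justify both the cancellation of the smooth remainder and the monotone limit, so that no mass is created or lost on $\widehat{\Delta}$.
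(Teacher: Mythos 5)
Your proof is correct and follows essentially the same route as the paper: both arguments localize near $\widehat{\Delta}$, replace $\hat{u}$ by a genuine local p.s.h.\ potential of $[\widehat{\Delta}]$ (the paper takes $\hat{u}'=\hat{u}+\hat{v}$ with $dd^c\hat{v}=\widehat{\beta}$, you take $\log|\sigma'|=\hat{u}-g$ via (\ref{ine_landautienvoiphi}); the two differ only by a smooth pluriharmonic function), and then conclude positivity by approximating that potential with smooth p.s.h.\ functions decreasing to it and passing to the weak limit of $dd^c\psi_k\wedge\widehat{T\otimes S}$.
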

\begin{proof} We only need to prove the positivity. This property is classical since the current $[\widehat{\Delta}]$ is of bidegree $(1,1).$ We give here a proof for the sake of the reader.  Fix a small open subset $\widehat{W}$ of $\widehat{X \times X}$ biholomorphic to a ball. We can find a smooth function $\hat{v}$ on $\widehat{W}$ such that $dd^c \hat{v}= \widehat{\beta}.$ Hence the function $\hat{u}'= \hat{u}+ \hat{v}$ satisfies $dd^c \hat{u}'= [\Delta] \ge 0.$ So $\hat{u}'$ is p.s.h. on $\widehat{W}.$ We then have $\widehat{T \otimes S} \wedge [\widehat{\Delta}]= dd^c\big(\hat{u}' \widehat{T \otimes S} \big)$ on $\widehat{W}.$ If $\hat{u}'_k$ is a sequence of smooth p.s.h. functions on $\widehat{W}$ decreasing to $\hat{u}'$, then the last current is the limit of $dd^c\big(\hat{u}'_k \widehat{T \otimes S} \big)$ which is clearly positive since it equals $dd^c\hat{u}'_k \wedge \widehat{T \otimes S}.$ The proof is finished.
\end{proof}
\begin{lemma}\label{le_continuitedeS} Let $Y$ be a closed subset of $X.$ Let $R$ be a positive $(p,p)$-current of $X$ and let $R_k$ be a sequence of positive $(p,p)$-currents of $X$ converging weakly to $R$ as currents in $X \backslash Y.$  Assume that $R$ has no mass on $Y$ and the masses of $R_k$ converge to the one of $R.$ Then $R_k$ converges weakly to $R$ in $X.$
\end{lemma}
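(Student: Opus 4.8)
The plan is to combine the compactness of positive currents of bounded mass with a subsequence argument, reducing the statement to the identification of weak limit points. Since $\langle R_k, \omega^{n-p}\rangle$ converges, the masses of the $R_k$ are uniformly bounded, so the sequence $(R_k)$ is relatively compact for the weak topology on $X$. Hence it suffices to prove that every weak limit point $R'$ of $(R_k)$ in $X$ coincides with $R$; once this is established, the whole sequence converges weakly to $R$ in $X$. So I would fix a subsequence $R_{k_j}$ converging weakly to some current $R'$ on $X$; being a weak limit of positive currents, $R'$ is positive.

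First I would identify $R'$ away from $Y$. For any smooth test form $\phi$ with compact support in the open set $X \setminus Y$, weak convergence on $X$ gives $\langle R', \phi\rangle = \lim_j \langle R_{k_j}, \phi\rangle$, while the hypothesis of weak convergence on $X \setminus Y$ gives $\lim_j \langle R_{k_j}, \phi\rangle = \langle R, \phi\rangle$. Therefore $R' = R$ as currents on $X \setminus Y$. Next I would exploit the convergence of masses: because the mass of a positive $(p,p)$-current is its pairing with the fixed smooth form $\omega^{n-p}$, weak convergence on $X$ yields $\langle R', \omega^{n-p}\rangle = \lim_j \langle R_{k_j}, \omega^{n-p}\rangle = \langle R, \omega^{n-p}\rangle$, that is, $R'$ and $R$ have the same mass.

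Finally I would conclude by a decomposition with respect to $Y$. Writing $R' = \mathbf{1}_{X \setminus Y} R' + \mathbf{1}_Y R'$ at the level of the trace measures, the equality $R' = R$ on the open set $X \setminus Y$ gives $\mathbf{1}_{X \setminus Y} R' = \mathbf{1}_{X \setminus Y} R$, and since $R$ has no mass on $Y$ the right-hand side equals $R$. Hence $R' - R = \mathbf{1}_Y R'$ is a positive current, and by the mass equality $\langle R' - R, \omega^{n-p}\rangle = 0$; as $\omega^{n-p}$ is strictly positive and $R' - R \ge 0$, this forces $R' - R = 0$, so $R' = R$. The only delicate point, and the main thing to justify carefully, is this last decomposition: that a positive current may be split by the characteristic functions $\mathbf{1}_{X \setminus Y}$ and $\mathbf{1}_Y$, that two positive currents agreeing on an open set have equal restrictions there, and that a positive $(p,p)$-current of zero mass vanishes. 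These are standard facts once positive currents are viewed through their trace measures, but they are precisely what makes the argument work.
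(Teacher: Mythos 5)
Your proposal is correct, but it follows a genuinely different route from the paper. The paper proves the convergence directly: for a positive continuous test form $\Phi$ it introduces cutoff functions $\chi_\epsilon$ supported away from an $\epsilon$-neighborhood of $Y$, uses the hypothesis to handle $R_k(\chi_\epsilon\Phi)$, and then bounds the remaining mass of $R_k$ near $Y$ by writing $\limsup_k \mu_k(\overline{Y}_{2\epsilon}) \le \|R\| - \liminf_k \mu_k(X\setminus \overline{Y}_{2\epsilon}) \le \mu(\overline{Y}_{2\epsilon})$, where the middle step uses mass convergence and the lower semicontinuity of the trace-measure mass on the open set $X\setminus\overline{Y}_{2\epsilon}$ under weak convergence in $X\setminus Y$; the right-hand side tends to $0$ because $R$ has no mass on $Y$. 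You instead run a soft compactness argument: extract a weak limit point $R'$ of the mass-bounded sequence, identify $R'$ with $R$ on $X\setminus Y$ and match total masses, then use the decomposition $R' = \mathbf{1}_{X\setminus Y}R' + \mathbf{1}_Y R' = R + \mathbf{1}_Y R'$ and kill the positive excess $\mathbf{1}_Y R'$ by the zero-mass argument. Both proofs exploit the same mechanism (mass cannot concentrate on $Y$ in the limit), but yours requires two additional standard ingredients that the paper's avoids: weak sequential compactness of positive currents of bounded mass (plus the uniqueness-of-limit-points argument), and the order-zero theory of positive currents needed to make sense of the Borel restrictions $\mathbf{1}_Y R'$ and to conclude that a positive current of zero mass vanishes. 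In exchange, your argument is conceptually cleaner (no explicit $\epsilon$-estimates or cutoffs) and isolates exactly the facts it needs; the paper's is more elementary and quantitative, estimating directly how much mass the $R_k$ can carry near $Y$. The delicate points you flag at the end are indeed the right ones to justify, and they all hold for positive currents via their trace measures, so there is no gap.
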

\begin{proof} For each $\epsilon>0,$ let $Y_{\epsilon}$ be the set of points in $X$ of distance less than $\epsilon$ to $Y.$ Let $\chi_{\epsilon}$ be a continuous function on $X$ such that $0 \le \chi_{\epsilon}\le 1$ and $\chi_{\epsilon}=1$ on $X \backslash Y_{2\epsilon}$ and $\chi_{\epsilon}=0$ on  $\overline{Y}_{\epsilon}.$ Take any continuous real form $\Phi$ on $X$ of  bidegree $n-p$. We need to prove that 
\begin{align} \label{limit_doiqua}
R_k(\Phi) \rightarrow R(\Phi) \quad  \text{ as } \quad k \rightarrow \infty.
\end{align}
Since a continuous form can be written as the difference of two continuous positive forms, we can assume that $\Phi$ is positive. The hypothesis on $R_k$ implies that $R_k( \chi_{\epsilon}\Phi)$ converges to $R(\chi_{\epsilon} \Phi).$ Hence in order to prove (\ref{limit_doiqua}), it is sufficient to show that 
\begin{align}\label{limit2}
\lim_{\epsilon \rightarrow 0} \delta_{\epsilon}=0,
\end{align}
where 
$$\delta_{\epsilon}=\limsup_{k \rightarrow \infty} \int_{\overline{Y}_{2 \epsilon}}R_k(\Phi) \rightarrow 0.$$
Let $\mu_k= R_k \wedge \omega^{n-p}$ and $\mu= R \wedge \omega^{n-p}$ be the trace measures of $R_k$ and $R$ respectively.  Observe that $\delta_{\epsilon}$ is less than a constant times 
$$\limsup_{k \rightarrow \infty}\mu_k( \overline{Y}_{2 \epsilon})=\|R\|- \liminf_{k \rightarrow \infty} \mu_k(X \backslash \overline{Y}_{2\epsilon}).$$  
Since the set $X \backslash \overline{Y}_{2 \epsilon}$ is an open subset of $X \backslash Y,$ the last limit is greater than $\mu(X \backslash \overline{Y}_{2\epsilon}).$ Hence we get 
$$\limsup_{k \rightarrow \infty} \int_{\overline{Y}_{2 \epsilon}}R_k(\Phi)  \lesssim  \|R\|- \mu(X \backslash \overline{Y}_{2\epsilon})= \mu(\overline{Y}_{2 \epsilon}).$$
The last quantity converges to zero as $\epsilon \rightarrow 0$ because $\mu$ has no mass on $Y.$ The proof is finished.
\end{proof}
\begin{proposition} \label{pro_haidinhnghiatuongduong} For $j=1$ or $2,$ we have 
\begin{align} \label{equ_haidinhnghiatuongduong} 
T \wedge S= (\Pi_j)_{*}\big(\widehat{T \otimes S} \wedge [\widehat{\Delta}] \wedge \widehat{\omega}^{n-1}\big),
\end{align}
where $T \wedge S$ is defined as in (\ref{defi_wedgecurrents}).
\end{proposition}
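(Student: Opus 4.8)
The plan is to reduce to the case where $T$ and $S$ are both smooth, where (\ref{equ_haidinhnghiatuongduong}) is exactly the classical identity (\ref{equ_vietointro2}), and to reach the general statement by two successive approximation arguments. Both sides of (\ref{equ_haidinhnghiatuongduong}) are $\R$-linear in $S$, so by Proposition~\ref{pro_convergence*} it is enough to know that they are continuous in $S$ for the $*$-topology and to treat smooth $S$. For the left-hand side this is read off from (\ref{defi_wedgecurrents}): if $S_k\to S$ in the $*$-topology then $dd^c\Phi\wedge S_k\to dd^c\Phi\wedge S$ in the $*$-topology, so $\mathcal{U}_T(dd^c\Phi\wedge S_k)\to\mathcal{U}_T(dd^c\Phi\wedge S)$ by continuity of $\mathcal{U}_T$, while the finite sum converges by weak convergence. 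For the right-hand side I would pair against a test form $\Phi$, use (\ref{defi_(1,1)current}) to write it as $\langle dd^c(\hat u\widehat{T\otimes S}),\widehat\omega^{n-1}\wedge\Pi_j^*\Phi\rangle+\langle\widehat{T\otimes S}\wedge\widehat\beta,\widehat\omega^{n-1}\wedge\Pi_j^*\Phi\rangle$, and integrate $dd^c$ by parts in the first term onto the closed form $\widehat\omega^{n-1}\wedge\Pi_j^*(dd^c\Phi)$ (here $\widehat\omega$ is closed); the first term is then continuous in $S$ by the last assertion of Lemma~\ref{co_ukhatichdoivoiTS}, and the second by the continuous dependence of $\widehat{T\otimes S}$ on $S$, which follows from Proposition~\ref{pro_currentbangnhau} and Lemma~\ref{le_continuitedeS}.

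Assume now that $S$ is smooth. The first step is to note that the Dinh--Sibony current then coincides with the ordinary product of the current $T$ with the closed form $S$: in (\ref{defi_wedgecurrents}) the argument $dd^c\Phi\wedge S=dd^c(\Phi\wedge S)$ admits the smooth potential $\Phi\wedge S$, and evaluating $\mathcal{U}_T$ on it (the $\alpha$-normalization killing the contribution of $\alpha_T=\sum_j a_j\alpha_j$) collapses the whole expression to $\langle T,\Phi\wedge S\rangle$, which is plainly continuous in $T$. The second step is to check that the right-hand side is also continuous in $T$, so that a further application of Proposition~\ref{pro_convergence*} reduces to $T$ smooth. The only delicate point is the map $T\mapsto\hat u\widehat{T\otimes S}$; since $\widehat S$ is smooth one has $\hat u\widehat{T\otimes S}=\hat u\widehat T\wedge\widehat S$ with $\widehat T=\Pi_1^*T$, and the identity (\ref{equ_le31}) of Lemma~\ref{le_ukhatichvoiS}, applied with $\Pi_1$ in place of $\Pi_2$, gives $\langle\hat u\widehat T\wedge\widehat S,\hat\eta\rangle=\langle T,(\Pi_1)_*(\hat u\,\widehat S\wedge\hat\eta)\rangle$ with $(\Pi_1)_*(\hat u\,\widehat S\wedge\hat\eta)$ a fixed smooth form. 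Hence $T_k\to T$ weakly forces $\hat u\widehat{T_k\otimes S}\to\hat u\widehat{T\otimes S}$, and applying $dd^c$, wedging with $\widehat\omega^{n-1}$ and pushing forward by $\Pi_j$ shows the right-hand side is continuous in $T$.

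It remains to treat smooth closed forms $T$ and $S$. In this case $\widehat{T\otimes S}=\widehat T\wedge\widehat S$ is a smooth closed form, and by the Poincar\'e--Lelong formula $dd^c(\hat u\widehat{T\otimes S})=dd^c\hat u\wedge\widehat{T\otimes S}=([\widehat\Delta]-\widehat\beta)\wedge\widehat{T\otimes S}$, so (\ref{defi_(1,1)current}) reduces to the genuine restriction $[\widehat\Delta]\wedge\widehat{T\otimes S}$. The right-hand side of (\ref{equ_haidinhnghiatuongduong}) is therefore $(\Pi_j)_*(\widehat{T\otimes S}\wedge\widehat\omega^{n-1}\wedge[\widehat\Delta])$, which equals $T\wedge S$ by the classical computation (\ref{equ_vietointro2}); this is where the normalization (\ref{equ_omega}) of $\widehat\omega$ is used. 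Combining the three reductions yields (\ref{equ_haidinhnghiatuongduong}) for $j=1$, and the case $j=2$ follows by the same argument with the two factors of $X\times X$ interchanged.

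The main obstacle is the continuity in the variable $T$ needed for the second reduction. The continuity in $S$ is handed to us by Lemma~\ref{co_ukhatichdoivoiTS}, but the dependence on $T$ has to be produced by hand, and its crux is that $(\Pi_1)_*(\hat u\,\widehat S\wedge\hat\eta)$ is a genuinely smooth form on $X$ --- the symmetric counterpart of Lemma~\ref{le_ukhatichvoiS}, whose proof rests on the local model of the blow-up and on $\hat u$ having only a logarithmic singularity along $\widehat\Delta$. A secondary point to watch is that the smooth approximants $T_k$ provided by Proposition~\ref{pro_convergence*} need not be positive; this causes no harm, since (\ref{equ_vietointro2}) holds for arbitrary smooth closed forms and every manipulation in the reductions is $\R$-linear.
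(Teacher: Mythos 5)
Your proof is correct, and its skeleton is the same as the paper's: reduce to smooth $S$ by continuity in $S$, then (for smooth $S$) reduce to smooth $T$ by continuity in $T$ using the $\Pi_1$-version of Lemma~\ref{le_ukhatichvoiS} (exactly the paper's device, phrased there as ``Lemma~\ref{le_ukhatichvoiS} applied to $\widehat{T}$ in place of $\widehat{S}$''), and finish with the classical identity (\ref{equ_vietointro2}) from the Introduction. The one genuine divergence is how you establish continuity in $S$ of the right-hand side. The paper uses Lemma~\ref{co_ukhatichdoivoiTS} only to get convergence of the masses of $R_k=\hat u\widehat T\wedge\widehat S_k$, and then invokes Lemma~\ref{le_continuitedeS} to upgrade ``convergence outside $\widehat\Delta$ plus mass convergence'' into full weak convergence of $R_k$. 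You instead pair against a test form $\Phi$ on $X$, integrate $dd^c$ by parts so that the test form on $\widehat{X\times X}$ becomes the \emph{closed} form $\widehat\omega^{n-1}\wedge\Pi_j^*(dd^c\Phi)$, and then quote the last assertion of Lemma~\ref{co_ukhatichdoivoiTS} directly. This is a small but real simplification: for the $dd^c(\hat u\,\widehat{T\otimes S})$ term it bypasses Lemma~\ref{le_continuitedeS} altogether, and with it the positivity hypothesis of that lemma, which the paper's application strictly speaking has to arrange by hand (the smooth approximants $S_k$ from Proposition~\ref{pro_convergence*} need not be positive, so the signed currents $R_k$ require a decomposition before Lemma~\ref{le_continuitedeS} applies). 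The trade-off is that your $\widehat\beta$-term still needs weak continuity of $S\mapsto\widehat{T\otimes S}$, for which you fall back on Proposition~\ref{pro_currentbangnhau} together with Lemma~\ref{le_continuitedeS} (or, more cheaply, the Dinh--Sibony continuity of $\widehat T\wedge\widehat S$ in $\widehat S$, which the paper recalls and also uses for the left-hand side), so the mass-convergence lemma is not fully eliminated --- but that term is the easy one. A further difference of presentation only: you spell out the facts the paper declares ``clear,'' namely that for smooth $S$ the Dinh--Sibony product (\ref{defi_wedgecurrents}) collapses, via the $\alpha$-normalization, to the ordinary pairing $\langle T,\Phi\wedge S\rangle$, and that for smooth $T,S$ the definition (\ref{defi_(1,1)current}) reduces by Poincar\'e--Lelong to the genuine restriction to $[\widehat\Delta]$; both computations are accurate.
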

\begin{proof} 
As explained in Introduction, the formula (\ref{equ_haidinhnghiatuongduong}) holds for smooth forms $T$ and $S.$ We consider now the general case. We already know that $T \wedge S$ depends continuously on  $S$ for the $*$-topology.  Let $\{S_k\}_{k\in \N}$ be a sequence of smooth forms in $\mathcal{D}_q$ which converges to $S$ in the $*$-topology. Put $\widehat{S}_k= \Pi_2^*(S_k)$ and $R_k=\hat{u} \widehat{T} \wedge \widehat{S}_k.$ It follows from Lemma \ref {co_ukhatichdoivoiTS}  that the masses of $R_k$ converge to the mass of $R=\hat{u} \widehat{T} \wedge \widehat{S}.$ Moreover, $R_k$ converges to $R$ in  $\widehat{X \times X} \backslash \widehat{\Delta}.$
Applying Lemma \ref{le_continuitedeS} to $\widehat{X \times X}$ in the place of $X,$ $R_k$ and $R,$ we see that the right-hand sides of (\ref{equ_haidinhnghiatuongduong}), which is defined in Corollary \ref{co_giaovoiduongcheo}, also depends continuously on  $S$ for the $*$-topology. Hence approximating $S$ by smooth forms allows us to assume that $S$ is smooth. Now Lemma \ref{le_ukhatichvoiS} applied to $\widehat{T}$ in place of $\widehat{S}$  implies that the right-hand side of (\ref{equ_haidinhnghiatuongduong}) is continuous in $T.$ When $S$ is smooth, it is clear that $T \wedge S$ depends continuously on  $T.$ Therefore, (\ref{equ_haidinhnghiatuongduong}) holds since we can approximate $T$ by closed smooth forms. The proof is finished.  
\end{proof} 

\bibliography{test2}

\begin{thebibliography}{10}

\bibitem{Bedford_Taylor}
{\sc E.~Bedford and B.~A. Taylor}, {\em A new capacity for plurisubharmonic
  functions}, Acta Math., 149 (1982), pp.~1--40.

\bibitem{Blanchard}
{\sc A.~Blanchard}, {\em Sur les vari\'et\'es analytiques complexes}, Ann. Sci.
  Ecole Norm. Sup. (3), 73 (1956), pp.~157--202.

\bibitem{Chern_Levine_Nirenberg}
{\sc S.~S. Chern, H.~I. Levine, and L.~Nirenberg}, {\em Intrinsic norms on a
  complex manifold}, in Global {A}nalysis ({P}apers in {H}onor of {K}.
  {K}odaira), Univ. Tokyo Press, Tokyo, 1969, pp.~119--139.

\bibitem{Demailly_ag}
{\sc J.-P. Demailly}, {\em Complex analytic and differential geometry}.
\newblock \url{http://www.fourier.ujf-grenoble.fr/~demailly}.

\bibitem{Demailly2}
{\sc J.-P. Demailly}, {\em Courants positifs et th\'eorie de l'intersection},
  Gaz. Math.,  (1992), pp.~131--159.

\bibitem{DinhSibony_regularization}
{\sc T.-C. Dinh and N.~Sibony}, {\em Regularization of currents and entropy},
  Ann. Sci. \'Ecole Norm. Sup. (4), 37 (2004), pp.~959--971.

\bibitem{DSfourier}
\leavevmode\vrule height 2pt depth -1.6pt width 23pt, {\em Geometry of
  currents, intersection theory and dynamics of horizontal-like maps}, Ann.
  Inst. Fourier (Grenoble), 56 (2006), pp.~423--457.

\bibitem{DinhSibony_pullback}
\leavevmode\vrule height 2pt depth -1.6pt width 23pt, {\em Pull-back of
  currents by holomorphic maps}, Manuscripta Math., 123 (2007), pp.~357--371.

\bibitem{DS_superpotential}
\leavevmode\vrule height 2pt depth -1.6pt width 23pt, {\em Super-potentials for
  currents on compact {K}\"ahler manifolds and dynamics of automorphisms}, J.
  Algebraic Geom., 19 (2010), pp.~473--529.

\bibitem{Fornaess_Sibony}
{\sc J.~E. Forn{\ae}ss and N.~Sibony}, {\em Oka's inequality for currents and
  applications}, Math. Ann., 301 (1995), pp.~399--419.

\bibitem{Gillet_Soule}
{\sc H.~Gillet and C.~Soul{\'e}}, {\em Arithmetic intersection theory}, Inst.
  Hautes \'Etudes Sci. Publ. Math.,  (1990), pp.~93--174 (1991).

\bibitem{GH}
{\sc P.~Griffiths and J.~Harris}, {\em Principles of algebraic geometry}, Wiley
  Classics Library, John Wiley \& Sons, Inc., New York, 1994.
\newblock Reprint of the 1978 original.

\bibitem{Sibony}
{\sc N.~Sibony}, {\em Quelques probl\`emes de prolongement de courants en
  analyse complexe}, Duke Math. J., 52 (1985), pp.~157--197.

\bibitem{Skoda}
{\sc H.~Skoda}, {\em Prolongement des courants, positifs, ferm\'es de masse
  finie}, Invent. Math., 66 (1982), pp.~361--376.

\bibitem{Voisin1}
{\sc C.~Voisin}, {\em Hodge theory and complex algebraic geometry. {I}},
  vol.~76 of Cambridge Studies in Advanced Mathematics, Cambridge University
  Press, Cambridge, english~ed., 2007.
\newblock Translated from the French by Leila Schneps.

\end{thebibliography}
\bibliographystyle{siam}

\noindent
Duc-Viet  Vu,
UPMC Univ Paris 06, UMR 7586, Institut de
Math{\'e}matiques de Jussieu-Paris Rive Gauche, 4 place Jussieu, F-75005 Paris, France.\\
{\tt duc-viet.vu@imj-prg.fr}

\end{document}